\documentclass[12pt]{amsart}
\usepackage{amssymb}
\usepackage{epsfig}
\usepackage{graphicx}
\textwidth15.8cm
\headheight12pt
\oddsidemargin.4cm
\evensidemargin.4cm

\date{29 May 2008}

\title{A non-crossing standard monomial theory}

\author{T. Kyle Petersen}\address{Department of Mathematics, University of Michigan,
  Ann Arbor MI 48109--1043}
\email{tkpeters@umich.edu}

\author{Pavlo Pylyavskyy}\address{Department of Mathematics, University of Michigan,
  Ann Arbor MI 48109--1043}
\email{pylyavskyy@gmail.com}

\author{David E Speyer}\address{Department of Mathematics, MIT,
  Cambridge, MA 02139-4307}
\email{speyer@math.mit.edu}
\thanks{Third author supported by a Research Fellowship from the Clay Mathematics Institute.}

%\keywords{}

%%% Tableau Macros
\newlength{\cellsz}
\newcounter{cellsize}
\newcommand{\setcellsize}[1]{%
  \setcounter{cellsize}{#1}%
  \setlength{\cellsz}{\value{cellsize}\unitlength}}%
%  \newsavebox{\cell}
%  \sbox{\cell}{\begin{picture}(\value{cellsize},\value{cellsize})
%  \put(0,0){\line(1,0){\value{cellsize}}}
%  \put(0,0){\line(0,1){\value{cellsize}}}
%  \put(\value{cellsize},0){\line(0,1){\value{cellsize}}}
%  \put(0,\value{cellsize}){\line(1,0){\value{cellsize}}}
%  \end{picture}}}
\setcellsize{10}%
\newcommand\cellify[1]{\def\thearg{#1}\def\nothing{}%
\ifx\thearg\nothing \vrule width0pt height\cellsz depth0pt\else
%\hbox to 0pt{\usebox{\cell} \hss}\fi%
\hbox to 0pt{{\begin{picture}(\value{cellsize},\value{cellsize})
  \put(0,0){\line(1,0){\value{cellsize}}}
  \put(0,0){\line(0,1){\value{cellsize}}}
  \put(\value{cellsize},0){\line(0,1){\value{cellsize}}}
  \put(0,\value{cellsize}){\line(1,0){\value{cellsize}}} \end{picture} \hss}}\fi%
\vbox to \cellsz{ \vss \hbox to \cellsz{\hss$#1$\hss} \vss}}
\newcommand\tableau[1]{\vcenter{\vbox{\let\\\cr
\baselineskip -16000pt \lineskiplimit 16000pt \lineskip 0pt
\ialign{&\cellify{##}\cr#1\crcr}}}}
\newcommand\tabl[1]{\vtop{\let\\\cr
\baselineskip -16000pt \lineskiplimit 16000pt \lineskip 0pt
\ialign{&\cellify{##}\cr#1\crcr}}}

\theoremstyle{plain}
\newtheorem{theorem}{Theorem}[section]
\newtheorem{question}{Question}
\newtheorem{proposition}[theorem]{Proposition}
\newtheorem{lemma}[theorem]{Lemma}
\newtheorem{corollary}[theorem]{Corollary}
\newtheorem{conjecture}[theorem]{Conjecture}

\theoremstyle{definition}
\newtheorem{definition}[theorem]{Definition}
\newtheorem{example}[theorem]{Example}

\theoremstyle{remark}
\newtheorem{remark}[theorem]{Remark}

\def\from{\leftarrow}
\def\Hull{\mathrm{Hull}}

\def\X{\,\,\lower2pt\hbox{\input{figX.pstex_t}}}
\def\noXv{\,\,\lower2pt\hbox{\input{figNoX.pstex_t}}}
\def\noXh{\,\,\lower2pt\hbox{\input{figNoX2.pstex_t}}}
\def\noXDU{\,\,\lower2pt\hbox{\input{figDU.pstex_t}}}
\def\noXDD{\,\,\lower2pt\hbox{\input{figDD.pstex_t}}}
\def\noXUD{\,\,\lower2pt\hbox{\input{figUD.pstex_t}}}
\def\noXUU{\,\,\lower2pt\hbox{\input{figUU.pstex_t}}}
\def\noXRR{\,\,\lower2pt\hbox{\input{figRR.pstex_t}}}
\def\noXRL{\,\,\lower2pt\hbox{\input{figRL.pstex_t}}}
\def\noXLR{\,\,\lower2pt\hbox{\input{figLR.pstex_t}}}
\def\noXLL{\,\,\lower2pt\hbox{\input{figLL.pstex_t}}}
\def\XRR{\,\,\lower2pt\hbox{\input{figXRR.pstex_t}}}
\def\XRL{\,\,\lower2pt\hbox{\input{figXRL.pstex_t}}}
\def\XLR{\,\,\lower2pt\hbox{\input{figXLR.pstex_t}}}
\def\XLL{\,\,\lower2pt\hbox{\input{figXLL.pstex_t}}}

\def\Z{\mathbb{Z}}
\def\R{\mathbb{R}}
\def\ZZ{\Z}
\def\RR{\R}
\def\CC{\mathbb{C}}

\DeclareMathOperator{\mnnL}{{\bf m}^{\mathit{(nn)}}_{\mathit L}}
\DeclareMathOperator{\mncL}{{\bf m}^{\mathit{(nc)}}_{\mathit L}}
\DeclareMathOperator{\mwsL}{{\bf m}^{\mathit{(ws)}}_{\mathit L}}

\begin{document}

\begin{abstract}
The second author has introduced non-crossing tableaux, objects whose non-nesting analogues are semi-standard Young tableaux. We relate non-crossing tableaux to Gelfand-Tsetlin patterns and develop the non-crossing analogue of standard monomial theory. Leclerc and Zelevinsky's weakly separated sets are special cases of non-crossing tableaux, and we suggest that non-crossing tableaux may help illuminate the theory of weakly separated sets.
\end{abstract}

\maketitle

\section{Introduction}

Consider an $m \times m$ matrix $X$ of formal variables $x_{ij}$. For $I$ a subset of $[m] := \{ 1,2, \ldots, m \}$, let $p_I$ be the determinant of the submatrix of $X$ whose columns are indexed by $I$ and whose rows are the bottom $|I|$ rows. The $p_I$ are known as \emph{Pl\"ucker coordinates}. The subalgebra of $\CC[x_{ij}]$ generated by the Pl\"ucker coordinates is called the \emph{Pl\"ucker algebra}. Obviously, the Pl\"ucker algebra is spanned, as a $\CC$-vector space, by monomials in the $p_I$. One of the classical topics in combinatorial commutative algebra is \emph{standard monomial theory}. It is the construction of a collection of monomials, indexed by semi-standard Young tableaux, which give a $\CC$-basis for the Pl\"ucker algebra. Specifically, $\prod_{r=1}^N p_{I_r}$ is a standard monomial if and only if the sets $I_1$, $I_2$, \dots, $I_N$ are the columns of a semi-standard Young tableaux. The relations expressing other monomials in terms of the standard monomials are classically known as \emph{straightening relations}.

In~\cite{P}, the second author showed that semi-standard Young tableaux could, in a certain sense, be considered as non-nesting objects. There is a general philosophy that for every non-nesting object there is a corresponding non-crossing object. The paper \cite{P}~introduced a notion of non-crossing tableaux, and showed that there is a bijection between non-nesting tableaux (i.e., semi-standard Young tableaux) and non-crossing tableaux, preserving shape and content. In this paper, we will explain how to build a basis of the Pl\"ucker algebra using non-crossing tableaux (implied by the slightly more general Corollary \ref{cor:cr}). As a corollary, we obtain a new degeneration of the flag variety to a Stanley-Reisner complex (Corollary \ref{thm:nctri}). 
%(Our definition, however, is slightly different from that of~\cite{P}.)

It is not clear yet what the best use of this technology will be; our view is that non-crossing tableaux are a tool that should be tried whenever semi-standard Young tableaux are applicable but not productive. One clue that non-crossing tableaux should be useful is that the non-crossing condition is a relaxation of the weak separation condition of~\cite{LZ}. (The semi-standard condition is neither stronger nor weaker than weak separation.) Weakly separated sets give commutative subrings of the quantum deformation of the Pl\"ucker algebra, and, conjecturally, are clusters in the cluster structure on the Pl\"ucker algebra. There are not enough collections of weakly separated sets to give a basis of the Pl\"ucker algebra (they give linearly independent, but not spanning, monomials). In the theory of cluster algebras, this discrepancy is (conjecturally) fixed by adding additional generators to the Pl\"ucker algebra. Non-crossing tableaux offer a compromise, giving a basis that coincides with the cluster construction when possible, but still uses only monomials in the Pl\"ucker coordinates. In particular, in the case of the Grassmannian $G(2,m)$, (that is, when we limit ourselves to $2 \times 2$ minors of $X$), the notions of non-crossing tableaux and weakly separated sets coincide. The corresponding degeneration of the Grassmannian has components indexed by the set of triangulations of an $m$-gon. This is an occurrence of the standard example of a correspondence between non-nesting objects and non-crossing objects---that the number of standard Young tableaux of shape $2 \times (m-2)$ is the same as the number of triangulations of an $m$-gon.

We now sketch our approach. Following ideas of~\cite{MS}, we first describe a sagbi basis of the Pl\"ucker algebra. From the perspective of algebraic geometry, this corresponds to giving a degeneration of the flag variety to the toric variety corresponding to the Gelfand-Tsetlin polytope. This tells us that one can construct bases of the Pl\"ucker algebra indexed by Gelfand-Tsetlin patterns, but does not construct any particular such basis. In the standard monomial theory, one then triangulates the Gelfand-Tsetlin polytope, in a triangulation whose faces are indexed by semi-standard Young tableaux. We give a different triangulation of the Gelfand-Tsetlin polytope, whose faces are indexed by non-crossing tableaux, and thus obtain a new set of standard monomials. To do this, we use the usual correspondence between semi-standard Young tableaux and collections of non-crossing lattice paths; we then replace the non-crossing tableaux by ``non-kissing'' lattice paths. We warn the reader that \emph{non-nesting} tableaux correspond to \emph{non-crossing} paths while \emph{non-crossing} tableaux correspond to \emph{non-kissing} paths.

The paper is organized as follows. Section \ref{sec:compat} presents the \emph{compatibility conditions} of non-nesting, non-crossing, and weak separation, as well as the concept of a \emph{compatibility complex}. Section \ref{sec:GT} defines the cone of Gelfand-Tsetlin patterns and reinterprets compatibility conditions in terms of lattice paths. Section \ref{sec:cr} relates the cone of Gelfand-Tsetlin patterns to the Pl\"ucker algebra. In particular, this section contains Corollary \ref{cor:cr}, which describes how to form new linear bases of the Pl\"ucker algebra. Section \ref{sec:driving} presents \emph{driving rules}, a general idea for triangulating the cone of Gelfand-Tsetlin patterns, from which both non-nesting and non-crossing triangulations are then derived. The consequences of driving rules for non-nesting tableaux are described in Section \ref{sec:Michigan}; and the consequences for non-crossing tableaux in Section \ref{sec:Boston}. Section \ref{sec:regular} explores further properties of the non-crossing triangulation.

%bad page break
\clearpage

\section{Compatibility complexes}\label{sec:compat}

Let $2^{\bf m}$ denote the set of all subsets of $[m]$.

Call two pairs $\{u<v\}$ and $\{x<y\}$, with $u$, $v$, $x$ and $y$ in $\ZZ \cup \{ \infty \}$, \emph{non-nesting} if \textbf{neither} of the following two conditions holds:
\begin{enumerate}
\item $x<u<v<y$,
\item $u<x<y<v$.
\end{enumerate}
Let $I = \{i_1 < i_2 < \cdots < i_k\}$ and $J = \{j_1 < j_2 < \cdots < j_l\}$ be two elements of $2^{\bf m}$, with $k \leq l$. Now we remove the ``common part" of $I$ and $J$: if for any $s$ we have $i_s = j_s$, remove $i_s$ and $j_s$ from $I$ and $J$. After several such steps we have formed $I' = \{i'_1 < \cdots < i'_{k'}\}$ and $J' =\{ j'_1 < \cdots < j'_{l'}\}$ so that $i'_s \neq j'_s$ for any $s$. 

\begin{definition}[Non-nesting rule]
Subsets $I$ and $J$ are non-nesting if:
\begin{enumerate}
  \item for all $1 \leq s \leq k'-1$, pairs $\{i'_s, i'_{s+1}\}$ and $\{j'_s, j'_{s+1}\}$ are non-nesting;
  \item if $k' < l'$ then $\{i'_{k'}, \infty \}$ and $\{ j'_{k'}, j'_{k'+1} \}$ are non-nesting.
\end{enumerate}
\end{definition}

\begin{example}
The sets $\{1,3,4\}$ and $\{2,3,5\}$ do not nest, while $\{1,3,5\}$ and $\{2,3,4\}$ do.
\end{example}

We will later encounter a simple graphical interpretation for the non-nesting rule.

\begin{remark}
The non-nesting condition is exactly the condition satisfied by columns of semi-standard Young tableaux; see \cite{P} for details.
\end{remark}

Let ${\bf m}^{(nn)}$ denote the set of subsets of $2^{\bf m}$ whose elements are pairwise non-nesting. We can think of this set as a simplicial complex.

\begin{example}
If $m =2$ we have four vertices: $\emptyset, \{1\}, \{2\}, \{1,2\}$, all pairwise non-nesting. Thus, every subset of these four vertices is a face, and ${\bf 2}^{(nn)}$ is nothing but a $3$-simplex.
\end{example}

We will examine the complex ${\bf m}^{(nn)}$ in a slightly more general setting in Section \ref{sec:Michigan}, as well as some analogous complexes. The non-nesting condition is the first example of what we call a \emph{compatibility condition}. In general, a compatibility condition, $c$, is any rule by which we select a collection of unordered pairs of subsets, $(I,J) = (J,I)$, with $I, J \in 2^{\bf m}$. The corresponding \emph{compatibility complex} ${\bf m}^{(c)}$ is a simplicial complex whose vertex set is $2^{\bf m}$, with faces given by subsets of $2^{\bf m}$ whose elements are pairwise compatible with respect to $c$.

We now introduce another compatibility condition. Call two pairs $\{u < v\}$ and $\{x<y\}$ \emph{non-crossing} if \textbf{neither} of the following two conditions holds:
\begin{enumerate}
\item $x<u<y<v$,
\item $u<x<v<y$.
\end{enumerate}
As before, let $I =\{ i_1 < i_2 < \cdots < i_k\}$ and $J = \{j_1 < j_2 < \cdots < j_l\}$ be two elements of $2^{\bf m}$, with $k \leq l$, and remove their common part to form $I'$ and $J'$.

\begin{definition}[Non-crossing rule]
Subsets $I$ and $J$ are non-crossing if:
\begin{enumerate}
  \item for all $1 \leq s \leq k'-1$, pairs $\{i'_s, i'_{s+1}\}$ and $\{j'_s, j'_{s+1}\}$ are non-crossing,
 \item  if $k' < l'$ then $\{i'_{k'}, \infty \}$ and $\{ j'_{k'}, j'_{k'+1} \}$ are non-crossing.
\end{enumerate}
\end{definition}

\begin{example}
Both $(\{1,3,5\},\{2,3,4\})$ and $(\{1,4,5\},\{2,3,6\})$ form non-crossing pairs, while sets $\{1,3,4\}$ and $\{2,3,5\}$ cross.
\end{example}

As with non-nesting, we will later present a simple graphical interpretation for this rule. Let ${\bf m}^{(nc)}$ denote the compatibility complex for the non-crossing rule.
%  Our viewpoint (and that of \cite{P}) is that ${\bf m}^{(nc)}$ should be considered as an alternative to ${\bf m}^{(nn)}$, and may be useful in any case where semi-standard Young tableaux are useful. In this paper, we will relate  ${\bf m}^{(nc)}$ to Gelfand-Tsetlin patterns and describe the non-crossing version of standard monomial theory 

\begin{remark}
The non-crossing condition is directly related to \emph{non-crossing tableaux} defined in \cite{P}. Note however that the ``semi-standard" version of the non-crossing rule adopted in \cite{P} is different from the rule given here. We prefer the current rule because it respects the isomorphism between the Grassmannians $G(k,m)$ and $G(m-k,m)$, and because it is more closely related to the condition of weak separation in~\cite{LZ}.
\end{remark}

%\begin{remark}
%In both the non-nesting and non-crossing cases, the extra condition when $k < l$ may seem unnatural. It is convenient to think that $i_k$ is followed by several \emph{large} $i_{k+1} < \cdots$. With this modification, the extra condition just becomes the normal non-nesting or non-crossing condition.
%\end{remark}

We now present a compatibility condition first defined by Leclerc and Zelevinsky \cite{LZ} and later studied in \cite{Sc1}, \cite{Sc2}.  Write $I - J$ for the set $\{i \mid i \in I, i \not \in J\}$, and $I \prec J$ if $i < j$ for all $i\in I$, $j\in J$.

\begin{definition}[Weak separation rule]
Subsets $I$ and $J$ are \emph{weakly separated} if at least one of the following holds:
\begin{enumerate}
  \item $|I| \geq |J|$ and $J-I$ can be partitioned into a disjoint union $J-I = J' \sqcup J''$ so that $J' \prec I-J \prec J''$,
  \item $|I| \leq |J|$ and $I-J$ can be partitioned into a disjoint union $I-J = I' \sqcup I''$ so that $I' \prec J-I < I''$.
\end{enumerate}
\end{definition}

\begin{example}
The sets $\{ 2,3,4\}$ and $\{1,3,5\}$ are weakly separated and non-crossing, whereas $\{1,4,5\}$ and $\{2,3,6\}$ are non-crossing but they are not weakly separated. The sets $\{1,3,4\}$ and $\{2,3,5\}$ are neither non-crossing nor weakly separated.
\end{example}

The preceding example suggests the following lemma, which follows immediately given the graphical interpretations for non-crossing and weak separation given in Theorem \ref{thm:path}. Let ${\bf m}^{(ws)}$ be the compatibility complex for the weak separation rule.

\begin{lemma} \label{lem:ncws}
Weak separation is a strengthening of the non-crossing rule. In other words, if $(I,J)$ is an edge of ${\bf m}^{(ws)}$, then $(I,J)$ is an edge of ${\bf m}^{(nc)}$.
\end{lemma}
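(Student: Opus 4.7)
The plan is to reduce to the case where $I$ and $J$ are disjoint, and then to use the threshold structure implicit in weak separation. Since both the weak separation and non-crossing rules are symmetric in $(I,J)$, I assume throughout $|I| \le |J|$. Weak separation condition (2) then yields a partition $I - J = I_a \sqcup I_b$ with $I_a \prec J - I \prec I_b$; equivalently, there is a threshold value below which all of $I_a$ lies and above which all of $I_b$ lies, with $J - I$ nestled in between.

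The first step is a reduction lemma asserting that the non-crossing rule applied to $(I, J)$ is equivalent to the rule applied to the disjoint pair $(I - J,\, J - I)$. This is not immediate, because the paper's position-matching deletion procedure may fail to strip out every common element: for example, with $I = \{1,2,3\}$ and $J = \{2,5,6\}$ the shared value $2$ survives since it sits at position $2$ in $I$ but position $1$ in $J$. I would prove the lemma by induction on $|I \cap J|$, analyzing the effect of inserting or deleting a single common value on the consecutive pairs $\{i'_s, i'_{s+1}\}$ and $\{j'_s, j'_{s+1}\}$. The crucial observation is that two pairs sharing an endpoint are automatically non-crossing, since the interlacing conditions $x<u<y<v$ and $u<x<v<y$ both demand strict inequalities that collapse at a shared value.

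Once the reduction is in place, the problem becomes the following concrete claim: disjoint, weakly separated sets $A = \{a_1 < \cdots < a_k\}$ and $B = \{b_1 < \cdots < b_l\}$ with $k \le l$ and a threshold $t \in \{0,\ldots,k\}$ satisfying $a_t < b_1$ and $b_l < a_{t+1}$ are non-crossing. For each $s$ with $1 \le s \le k-1$ I examine $\{a_s, a_{s+1}\}$ against $\{b_s, b_{s+1}\}$ in three cases: if $s+1 \le t$ both $a$'s sit below all of $B$ and the two pairs are totally separated; if $s \ge t+1$ both sit above $B$ and again the pairs are totally separated; and if $s = t$ one obtains $a_s < b_s < b_{s+1} < a_{s+1}$, a nested (hence non-crossing) configuration. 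The boundary check when $k < l$, involving $\{a_k, \infty\}$ against $\{b_k, b_{k+1}\}$, splits analogously into $t = k$ (the $a$-pair entirely below) and $t < k$ (the $b$-pair entirely below).

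The main obstacle is the reduction lemma: although the threshold analysis is short and mechanical, justifying rigorously that weak separation lets one ignore the common elements surviving the paper's reduction requires a careful look at that procedure. An alternative route, foreshadowed by the authors themselves, is to defer the proof until the lattice-path translation of Theorem \ref{thm:path}, under which both weak separation and non-crossing become transparent visual conditions on a pair of paths and the implication is immediate.
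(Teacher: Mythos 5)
Your disjoint-case threshold analysis is correct, and your primary route is genuinely different from the paper's: the paper gives no set-theoretic argument at all, but simply observes that Lemma~\ref{lem:ncws} is immediate from Theorem~\ref{thm:path}, since in path language weak separation means ``non-kissing, at most one crossing, no hourglass'' while non-crossing means ``non-kissing.'' That is exactly the fallback you mention in your final sentence. (A small side point: when $|I|=|J|$ it may be condition (1) of weak separation that holds rather than (2), so you need to allow a swap of $I$ and $J$ before asserting a partition of $I-J$; this is harmless since both rules are symmetric.)

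The genuine gap is the reduction lemma, which you correctly identify as the crux but do not prove. The statement is true --- the non-crossing verdict for $(I,J)$ does agree with the verdict for the disjoint pair $(I-J,\,J-I)$ --- but the sketched induction is not as mechanical as suggested. If the deleted common value $c$ occupies position $s_1$ in the reduced sequence $I'$ and position $s_2\neq s_1$ in $J'$, then after deletion the indices of one sequence shift starting at $s_1$ and of the other starting at $s_2$, so every consecutive-pair comparison with index between $s_1$ and $s_2$ is re-paired: an $I$-pair previously tested against $\{j'_s,j'_{s+1}\}$ is now tested against a shifted $J$-pair, and the tail comparison against $\infty$ shifts as well. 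The ``shared endpoint'' observation does not dispose of this middle range; indeed the comparisons that contain $c$ typically do not share an endpoint with the opposing pair at all (that happens only when $|s_1-s_2|\leq 1$), so the verdict must be shown to be preserved in aggregate by a real case analysis of that range, using for instance that all $j'_s$ with $s<s_2$ lie below $c$ while all $i'_s$ with $s>s_1$ lie above it. The cleanest rigorous proof of your reduction lemma is in fact the path picture: deleting $c\in I\cap J$ removes a north step at the same time from both paths, translating both suffixes equally, and since two paths can only meet at equal times the whole contact (kissing) structure is unchanged. But once you pass to paths you may as well invoke Theorem~\ref{thm:path} directly and conclude in one line, which is precisely what the paper does; so as written, your elementary route is incomplete at its central step, and its completion either demands a substantial combinatorial argument or collapses back into the paper's proof.
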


Later we will see that ${\bf m}^{(nc)}$ is pure with predictable dimension, and although ${\bf m}^{(ws)}$ is a proper subcomplex, it is (conjecturally) pure of the same dimension.

For any compatibility condition $c$, we can study subcomplexes of the compatibility complex induced by restricting the vertex set in some way. For example, we can restrict the size of allowable subsets: ${\bf m}_{k\leq l}^{(c)}$ denotes the complex whose vertices are those subsets of $[m]$ with at least $k$ and no more than $l$ elements. 
%Another restriction is the \emph{$w$-chamber condition}, $\ch(w)$, introduced in \cite{LZ}. For a permutation $w \in S_m$, it says $I \in 2^{\bf m}$ is a \emph{$w$-chamber set} if for each $j \in I$, $I$ also contains all indices $i$ such that $i < j$ and $w_i < w_j$. Let ${\bf m}_{\ch(w)}^{(c)}$ denote the compatibility complex for $w$-chamber sets. Note that ${\bf m}_{\ch(w_0)}^{(c)} = {\bf m}^{(c)}$, where $w_0$ is the longest permutation. 
We will present a more general kind of restriction of the vertex set in Section \ref{sec:GT}.

As examples of the types of questions we will try to answer, we present a theorem and conjecture for weakly separated compatibility complexes.

\begin{theorem} \cite[Theorem 1.3]{LZ} \label{thm:lzkl}
The dimension of ${\bf m}^{(ws)}_{k \leq l}$ is $$\binom{m+1}{2} - \binom{m+1-l}{2} - \binom{k+1}{2} + 1.$$
\end{theorem}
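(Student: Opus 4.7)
The theorem amounts to computing the size of a maximum pairwise weakly separated collection $\mathcal{F}\subset 2^{[m]}$ with $k\le|S|\le l$ for every $S\in\mathcal{F}$: setting $N=\binom{m+1}{2}-\binom{m+1-l}{2}-\binom{k+1}{2}+2$, the claim is $\dim{\bf m}^{(ws)}_{k\leq l}=N-1$, which splits into a lower bound (construct $\mathcal{F}$ of size $N$) and a matching upper bound.

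For the lower bound, the plan is to exhibit an explicit weakly separated family of size $N$. The starting observation is that any two intervals $\{a{+}1,\ldots,a{+}r\}$ of $[m]$ are automatically weakly separated, so one gets a block of $\sum_{r=k}^{l}(m-r+1)$ pairwise weakly separated subsets simply by taking all intervals of every admissible length. I would then adjoin a carefully chosen collection of \emph{shifted} subsets---obtained by deleting an internal element of an interval and replacing it with a nearby element---which can be arranged to remain weakly separated with every interval and with each other. A direct binomial manipulation, matching the three binomial terms in $N$ against intervals, left-shifts, and right-shifts, shows that the total comes to exactly $N$.

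For the upper bound, the plan is to invoke the graphical interpretation of weak separation promised in Theorem \ref{thm:path}: each subset of $[m]$ is represented by a lattice path in a staircase region, and weak separation becomes a local \emph{non-kissing} condition refining the non-crossing condition of Lemma \ref{lem:ncws}. A pairwise weakly separated family then becomes a family of pairwise non-kissing lattice paths. The plan is to assign to each path an injective signature---most naturally, a distinguished edge or corner cell of the staircase---in such a way that the total number of signatures available in the region cut out by the size constraints $k\le|S|\le l$ equals $N$ up to a small global boundary contribution. Counting signatures then yields $|\mathcal{F}|\le N$.

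The upper bound is the main obstacle. Weak separation is a subtle, not-obviously-local condition, and many combinatorially unrelated maximal families coexist, so any proof needs an invariant of $\mathcal{F}$ that is uniformly bounded independent of the family's shape. I expect the cleanest route is induction on $m$, or on $l-k$: pick a subset $S\in\mathcal{F}$ that is extremal in the path picture---say one whose lattice path touches the boundary of the staircase in a maximal way---remove it, and argue that the resulting family either embeds into a strictly smaller instance of the problem or admits a local mutation that lowers the target bound by one. Converting the non-local weak-separation axiom into a genuinely local step that inducts cleanly is precisely what the graphical interpretation of Theorem \ref{thm:path} is there to provide; without it, identifying the correct subset to peel off and tracking how $N$ decreases would become a real combinatorial tangle.
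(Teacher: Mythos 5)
The decisive step here is the upper bound, and your proposal does not actually contain it: the ``injective signature'' is never defined and the induction on $m$ or on $l-k$ is never set up --- you flag this yourself as the main obstacle. The paper's route is concrete: take $L$ to be the partial staircase with sinks $(m-l,l),(m-l+1,l-1),\ldots,(m-k,k)$, so that any pairwise weakly separated family in $2^{\mathbf{m}}_{k\leq l}$ is, by Lemma \ref{lem:ncws}, a face of $\mncL$; the Boston driving rule shows that $\mncL$ triangulates $GT_L$ (Theorem \ref{thm:Bostonnc}, Corollary \ref{thm:nctri}), hence by Proposition \ref{pro:trigeom} and Corollary \ref{cor:purenc} it is pure of dimension $N(L)$, so the family has at most $N(L)+1$ members; the binomial expression is then just the count of critical nodes of $L$. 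Your ``distinguished edge or corner'' idea is a shadow of the paper's second proof of this bound (each new traffic stream can only be created by a split at a critical node, so there are at most $N(L)+1$ streams), but the content you leave open is exactly what makes that assignment well defined: a pairwise non-kissing family is recovered as the Boston resolution of $\sum_{p}T_p$, so distinct paths really do branch off at distinct critical nodes. Note also that only the implication weakly separated $\Rightarrow$ non-kissing is needed; the finer characterization in Theorem \ref{thm:path}(3) (at most one crossing, no hourglasses) plays no role, and your phrase ``weak separation becomes a local non-kissing condition'' conflates weak separation with the strictly weaker non-crossing rule.

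Two further problems. First, an off-by-one: reading ``dimension'' literally you set $N=\binom{m+1}{2}-\binom{m+1-l}{2}-\binom{k+1}{2}+2$ and plan to construct a weakly separated family of size $N$; no such family exists. The displayed formula is itself the maximal number of pairwise weakly separated sets (this is precisely how the result reappears in Section \ref{sec:Boston}, where the maximum is shown to be $N(L)+1$); for instance with $m=4$, $k=l=2$ the maximum is $5$, not $6$, so the lower-bound half of your plan aims at an unreachable target. Second, even at the correct value the lower bound is not a ``direct binomial manipulation'': intervals of admissible lengths are pairwise weakly separated but fall well short (in the example above they give $3$ of the needed $5$ sets), and your shifting recipe, which deletes an internal element of an interval, produces nothing at all when $k=l=2$; verifying that a completion stays pairwise weakly separated is the nontrivial part. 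The paper does not reprove this half either --- it is taken from Leclerc and Zelevinsky, whose explicit maximal collections attain the bound --- so if you intend to prove attainment you must supply a genuine construction rather than a sketch.
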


%\begin{theorem} \cite[Theorem 1.4]{LZ} \label{thm:lzw}
%The dimension of ${\bf m}^{(ws)}_{\ch(w)}$ is $\ell(w) + m + 1$, where $\ell$ is the length function.
%\end{theorem}

%\begin{conjecture} \cite[Conjecture 1.5]{LZ} \label{conj:lzw}
%Complex ${\bf m}^{(ws)}_{\ch(w)}$ is pure.
%\end{conjecture}

\begin{conjecture} \cite[Conjecture 1]{Sc1} \label{conj:lzk}
Complex ${\bf m}^{(ws)}_{k \leq l}$ is pure.
\end{conjecture}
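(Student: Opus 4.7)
The plan is to prove purity by constructing a mutation structure on weakly separated collections, analogous to cluster-algebraic mutations, and showing both that any two maximum-size collections are connected by mutations and that every maximal collection is in fact maximum. I would first pass to the lattice-path interpretation of weak separation that is developed in Theorem~\ref{thm:path}: translating the inductive ``remove the common part'' definition into a single geometric picture of paths in a strip makes the local combinatorics far more tractable than the raw rule, and in particular puts weak separation on the same footing as the already-understood non-crossing condition.

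The first concrete step is to define a \emph{square move}. Given a subset $S \subseteq [m]$ and elements $a<b<c<d$ of $[m]\setminus S$, with the resulting sizes all lying in $[k,l]$, the pair $S\cup\{a,c\}$ and $S\cup\{b,d\}$ is not weakly separated, but each of them is weakly separated with each of $S\cup\{a,b\}$, $S\cup\{b,c\}$, $S\cup\{c,d\}$, $S\cup\{a,d\}$. In any maximum collection $C$ containing these four ``edge'' sets, exactly one of $S\cup\{a,c\}$, $S\cup\{b,d\}$ lies in $C$, and replacing it with the other yields another maximum weakly separated collection. Verifying that the swap is compatible with every remaining element of $C$ reduces to a finite case-check in the path model, and the result is a mutation graph on the set of maximum collections.

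The heart of the argument is an induction on $m$. Given a maximal weakly separated collection $C$ in ${\bf m}^{(ws)}_{k\leq l}$, decompose it as $C_0 = \{I\in C : m\notin I\}$ and $C_1 = \{I\setminus\{m\} : I\in C,\ m\in I\}$. Each is a weakly separated collection in $[m-1]$ with shifted size constraints, and the inductive hypothesis yields the exact size of a maximum collection on each side. Combining this with the upper bound of Theorem~\ref{thm:lzkl} pins down how much slack $C$ can have; mutations are then used, inductively, to adjust $C_0$ and $C_1$ toward their respective maxima and force $|C|$ to meet the Leclerc--Zelevinsky bound. Lemma~\ref{lem:ncws} plus the (independently proved) purity of ${\bf m}^{(nc)}_{k\leq l}$ provides a useful sanity check at each stage, since $C$ sits inside a pure complex of the predicted dimension.

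The main obstacle, and the reason this remains a conjecture in the paper, is precisely the interface between $C_0$ and $C_1$: a stuck maximal collection may admit no mutation that grows $C_0$ without first disturbing a set containing $m$, and symmetrically for $C_1$. Handling this requires a global propagation argument for mutations across the interface, which in the path picture amounts to understanding how paths that touch the top boundary of the strip can be locally modified without disturbing the rest of the configuration. I expect this to be the genuine difficulty, and to require input beyond the non-crossing/non-kissing framework developed in this paper.
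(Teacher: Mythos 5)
The statement you are addressing is not proved in the paper at all: it is stated as Conjecture~\ref{conj:lzk} (attributed to Scott \cite{Sc1}, in the spirit of Leclerc--Zelevinsky \cite{LZ}), and the authors say explicitly that their driving-rule/triangulation machinery proves purity only for the non-nesting and non-crossing complexes (Corollaries~\ref{cor:purenn} and~\ref{cor:purenc}), not for ${\bf m}^{(ws)}_{k\leq l}$, which sits inside ${\bf m}^{(nc)}_{k\leq l}$ as a proper subcomplex by Lemma~\ref{lem:ncws}. So there is no paper proof to match your argument against, and your proposal does not close the gap either: by your own admission in the final paragraph, the interface step of the induction on $m$ (growing $C_0$ and $C_1$ simultaneously past sets that straddle the element $m$) is left unresolved, and that step \emph{is} the conjecture, not a technical remainder.

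Beyond the acknowledged hole, two intermediate claims are asserted rather than established. First, the square-move step: you claim that if a maximum collection $C$ contains the four ``edge'' sets $S\cup\{a,b\}$, $S\cup\{b,c\}$, $S\cup\{c,d\}$, $S\cup\{a,d\}$, then exactly one diagonal lies in $C$ and swapping it for the other preserves weak separation with every remaining member of $C$, ``by a finite case-check in the path model.'' This is not a finite local check: the element being tested against the swap can be an arbitrary subset of $[m]$, so one needs a genuine lemma (that weak separation from the four edge sets and one diagonal forces weak separation from the other diagonal), and, more seriously, nothing in your outline guarantees that a stuck maximal collection contains such a quadruple at all, so availability and connectivity of mutations are assumed, not proved. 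Second, the inductive step silently needs that every \emph{maximal} collection restricted to $C_0$ and $C_1$ can be completed to maximum ones compatibly, which again presupposes purity in a setting only marginally smaller than the one you are inducting on. The purity conjecture was in fact settled only later (by Danilov--Karzanov--Koshevoy and, independently, Oh--Postnikov--Speyer), using a global model (generalized tilings, respectively plabic graphs) precisely to control the mutation structure your sketch takes for granted; the non-crossing/non-kissing framework of this paper gives the dimension upper bound via Theorem~\ref{thm:lzkl} and Corollary~\ref{cor:purenc}, but not the matching lower bound for every facet.
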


Though we are unable to use our approach to prove Conjecture~\ref{conj:lzk}, we are able to prove analogous results for the non-nesting and non-crossing compatibility complexes; one consequence is that we obtain Theorem~\ref{thm:lzkl} as a corollary.

\begin{remark}
There are many other compatibility rules. For example there is the \emph{strong separation} rule of \cite{LZ} and the \emph{semi-noncrossing} rule of \cite{P}. One can also conjure up a rule which would relate to the non-nesting rule the same way weak separation relates to non-crossing. Later in the paper we consider the whole family of rules which we call \emph{driving rules}. Among the rules mentioned above the non-nesting, the non-crossing and the semi-noncrossing rules fall into this category.
\end{remark}

\section{Gelfand-Tsetlin patterns}\label{sec:GT}

Let $L_m$ be the diagram of the staircase shape $(m-1, \ldots, 1)$ in French notation. Call the southwest corner of the shape {\it {the source}}, and call the northeast corners (including two degenerate nodes) {\it {the sinks}}. More generally we can form a \emph{partial staircase} $L \subset L_m$ by selecting a subset of the sinks of $L_m$ and all the nodes to the south and west of these sinks. See Figure  \ref{fig:dr3}, where the sinks have coordinates $(1,5)$, $(3,3)$, $(4,2)$, and $(6,0)$.  We refer to the squares of the square grid as \emph{cells}. We say that a cell is a \emph{cell of $L$} if either its northwest or its southeast corner is a node of $L$; we say that a cell is an \emph{internal cell} if all of its nodes are in $L$ and \emph{external} otherwise. We say that two external cells are \emph{connected} to each other if they are horizontally or vertically adjacent and the edge separating them does not lie in $L$. In Figure~\ref{fig:dr3}, the edges of $L$ are drawn with solid lines, while each connected class of external cells is surrounded by a dotted line.\footnote{The conventions on external cells are technical, and the reader may first want to consider just the cases where $L$ is a rectangle or a staircase.} We divide the cells into \emph{connectivity classes}, where two cells are in the same connectivity class if and only if there is a chain of cells linking one to the other in which each consecutive pair is connected to each other.

% We also have an \emph{external cell} at each outer corner of $L$ between two sinks. That is, if $(a,b)$ and $(a+i,b-i)$ are neighboring sinks (i.e., $L$ has no sinks $(a+i',b-i')$ with $i'<i$), there is an external cell with $(a,b-i)$ as its southwest corner. When we refer to cells of a partial staircase shape $L$, unless otherwise specified we mean both internal and external cells. Thus, the example in Figure \ref{fig:dr3} has sixteen cells: thirteen internal and three external, the external cells shown with dotted lines. 

For a partial staircase $L$, a \emph{Gelfand-Tsetlin pattern} $T$ of shape $L$ is a filling of the cells of $L$ with non-negative integers so that entries weakly increase along rows and columns, any two connected external cells contain the same entry, and the external cells to the west of $L$ contain zeroes. A real-valued Gelfand-Tsetlin pattern of shape $L$ is a filling of the cells of $L$ with non-negative real numbers obeying the same condition. An example of a Gelfand-Tsetlin pattern of partial staircase shape is given in Figure \ref{fig:dr3}.

\begin{figure}
\begin{center}
\includegraphics{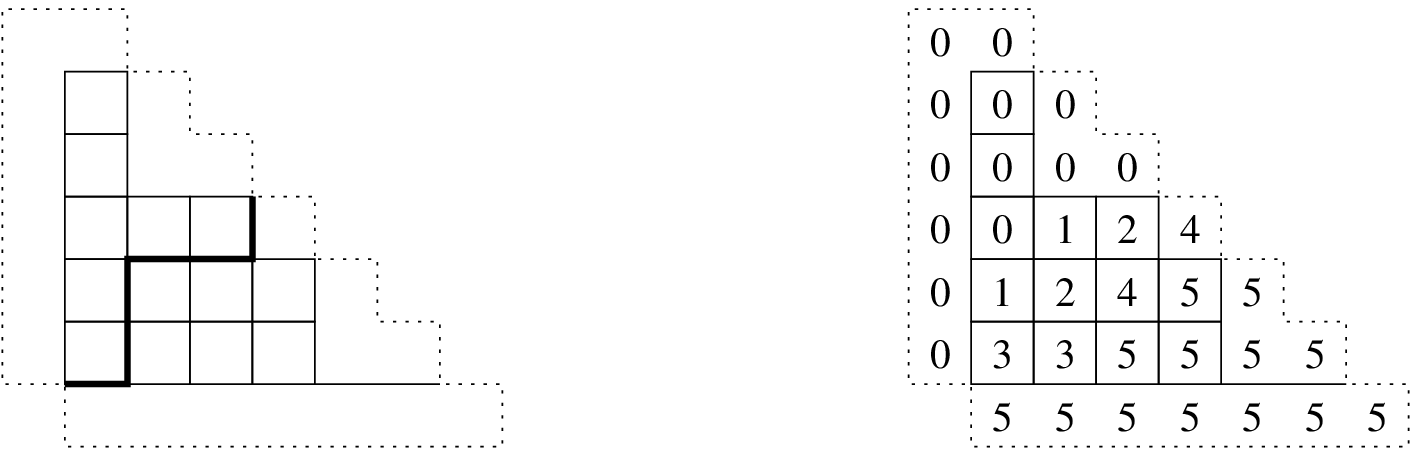}
\end{center}
\caption{}\label{fig:dr3}
\end{figure}

To each path along the edges of $L_m$ from the source to a sink moving in north and east directions we can associate a word of length $m$ on the alphabet $\{N,E\}$. This word is easily transformed into an element $I \in 2^{\bf m}$ by taking the index of north steps along the path, so that, for example, $ENNEEN \leftrightarrow \{2,3,6\}$ (see the left side of Figure \ref{fig:dr3}). Clearly this is a bijection between the set of paths on $L_m$ and $2^{\bf m}$. We denote by $p=p(I)$ the path corresponding to $I$. 

Note that $p$ might or might not fit into a partial staircase $L$. Thus, the shape of $L$ can be viewed as a restricting condition on the subsets of $2^{\bf m}$. We denote by ${\bf m}_L^{(c)}$ the compatibility complex with vertex set $I \in 2^{\bf m}$ such that $p(I)$ fits inside $L$. For example, if $L$ is given by selecting sinks $(m-l,l), (m-l+1,l-1), \ldots, (m-k,k)$, we have ${\bf m}^{(c)}_L = {\bf m}_{k\leq l}^{(c)}$.

We will now see the compatibility rules we encountered have a nice description in terms of corresponding paths. 

%Each path $p$ in $L$ cuts out of the south-east corner of $L$ a shape $L_p$. 

\begin{definition}[Non-crossing paths]
%Paths $p$ and $q$ are \emph{non-crossing} if one of the shapes $L_p$ or $L_q$ contains the other. 
Paths $p$ and $q$ are \emph{non-crossing} if $p$ always lies to the northwest of $q$, or always lies to the southeast of $q$. 
\end{definition}

Let us call a \emph{common part} of $p$ and $q$ a maximal (by inclusion) connected subpath (possibly a single node) which is shared by both of them. (The ``common part" of subsets $I$ and $J$ corresponds the vertical edges in the common parts of paths $p(I)$ and $p(J)$.) A common part is called \emph{proper} if it does not include the source or any sink. 

\begin{definition}[Non-kissing paths]
Paths $p$ and $q$ are \emph{non-kissing} if $p$ and $q$ leave every proper common part in same direction by which they entered.
\end{definition}

Note that every proper common part of a non-crossing pair is a kiss, and every proper common part of a non-kissing pair is a crossing.

Figure \ref{fig:dr4} illustrates the above definitions. Two pairs of paths are shown locally. Suppose in both cases the paths share one proper common part consisting of steps $N$, $E$, $N$, $N$, $E$. The pair on the left is kissing but non-crossing, while the pair on the right is crossing but non-kissing. 

\begin{figure}
\begin{center}
\input{dr4.pstex_t}
\end{center}
\caption{}\label{fig:dr4}
\end{figure}

We introduce one more piece of terminology. Let $p$ and $q$ be a pair of paths. We define a pair of nodes $v=(v_1,v_2)$ in $p$ and $w=(w_1,w_2)$ in $q$, to be an \emph{hourglass} if $v$ is due northwest of $w$ (i.e., $v_1=w_1-k$ and $v_2=w_2+k$ for some $k \geq 0$), the edges of $p$ leading into and out of $v$ are respectively pointed $E$ and $N$ and the edge of $q$ leading into and out of $w$ are respectively pointed $N$ and $E$. (We also define nodes $v$ and $w$ to be an hourglass if the same holds with the roles of $p$ and $q$ interchanged.) 

%FIGURE
 
Now we are ready to state the following theorem.

\begin{theorem} \label{thm:path}
Let $I$ and $J$ be subsets of $[m]$ with corresponding paths $p=p(I)$ and $q=p(J)$. Then,
\begin{enumerate}
  \item $I$ and $J$ are non-nesting if and only if $p$ and $q$ are non-crossing,
  \item $I$ and $J$ are non-crossing if and only if $p$ and $q$ are non-kissing,
  \item $I$ and $J$ are weakly separated if and only if $p$ and $q$ are non-kissing, cross at most once and do not have any hourglasses. 
\end{enumerate}
\end{theorem}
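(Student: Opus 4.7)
The plan is to establish a geometric dictionary between $(I,J)$ and $(p,q)$ via the column sequences $a_r = i_r - r$ and $b_r = j_r - r$, which give the horizontal positions of the $r$-th vertical edges of $p$ and $q$. The foundational observation, which I would establish first, is that $a_s = b_s$ iff $i_s = j_s$ iff $p$ and $q$ share their $s$-th vertical edge; consequently, the iterative removal of the common part from $I,J$ corresponds exactly to the removal of shared vertical edges from the paths, and the reduced indices of $I', J'$ enumerate the unshared N-steps.

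For part (1), non-crossing paths is equivalent (via the standard fact that $|I\cap[t]| \geq |J\cap[t]|$ for all $t$ iff $i_s \leq j_s$ for $s$ in the common index range) to the statement that the signs of $i'_s - j'_s$ are constant in $s$, with the $\infty$-pair boundary condition pinning the sign when $k' < l'$. On the set side, a brief four-element case analysis shows that non-nesting of the pair $\{i'_s, i'_{s+1}\}, \{j'_s, j'_{s+1}\}$ is equivalent to $\mathrm{sign}(i'_s - j'_s) = \mathrm{sign}(i'_{s+1} - j'_{s+1})$, and the boundary pair condition matches; the two descriptions agree.

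For part (2), the key geometric observation is that between two consecutive unshared N-step heights $r_s < r_{s+1}$, the intervening shared heights and any shared edges together form a single common part of the paths, and at this common part the paths either \emph{kiss} (same side on both ends, $\mathrm{sign}(a_{r_s} - b_{r_s}) = \mathrm{sign}(a_{r_{s+1}} - b_{r_{s+1}})$) or \emph{cross through} (opposite signs). A case analysis on the set side shows that the pair $\{i'_s, i'_{s+1}\}, \{j'_s, j'_{s+1}\}$ is crossing exactly when these signs agree \emph{and} the reduced indices interleave in time ($i'_s < j'_s < i'_{s+1} < j'_{s+1}$ or its mirror); the interleaving condition ensures the paths actually meet in a common part between reduced heights $r_s, r_{s+1}$. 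Non-crossing pairs split into the nested case (opposite signs, cross through the common part) and the disjoint case (same signs without interleaving, no common part). The $\infty$-pair boundary is treated analogously.

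For part (3), building on part (2), I translate the partition $J - I = J' \sqcup J''$ with $J' \prec I - J \prec J''$ from weak separation into the sign pattern of the reduced sequence $(a_r - b_r)$, which must then have at most one sign flip, i.e., at most one crossing of the paths. The no-hourglass condition rules out the remaining local "X" configuration on an anti-diagonal, which would otherwise satisfy non-kissing and $\leq 1$ crossing yet violate the block structure required by weak separation. The main obstacle I expect is in part (2): verifying the correspondence between interleaving and the existence of a kiss common part uniformly across all possible shapes of a common part---a single shared node, a single shared edge (horizontal or vertical), or a chain of shared edges---each with its own entry and exit geometry, together with the bookkeeping when multiple shared heights sit between consecutive unshared ones.
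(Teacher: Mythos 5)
Your coordinate dictionary is sound, and parts (1) and (2) of your plan do work, by a route more computational than the paper's (the paper simply observes that every common part can be contracted to a single node, since the compatibility conditions and the kissing/crossing dichotomy are insensitive to the common parts, and then verifies the single-node case directly). Your key identifications check out: $a_s=b_s$ iff $i_s=j_s$ iff the $s$-th north edges coincide, so the index-wise cancellation matches the shared vertical edges; for (1), non-nesting of consecutive reduced pairs is exactly sign-constancy of $i'_s-j'_s$ and the $\infty$-pair pins the sign to the geometrically forced side when $k'<l'$; for (2), the crossing of $\{i'_s,i'_{s+1}\}$ with $\{j'_s,j'_{s+1}\}$ is exactly ``same sign and interleaved,'' and the interleaving $j'_s<i'_{s+1}$ (resp.\ its mirror) is equivalent to the horizontal runs between heights $r_s$ and $r_{s+1}$ actually overlapping, so it is equivalent to the existence of a kiss there; the boundary pair $\{i'_{k'},\infty\}$ matches the common part in the tail where the longer path turns north off the shorter one. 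The obstacle you flag (uniformity over the possible shapes of a common part, and the common parts containing the source or a sink, which are exactly the non-proper ones and carry no set-side condition) is real but routine.

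The genuine gap is part (3). First, weak separation is stated in terms of the set differences $I-J$ and $J-I$ and a partition $J-I=J'\sqcup J''$ with $J'\prec I-J\prec J''$, whereas your machinery is built on the index-wise reduced sequences $I',J'$; these are different objects (for $I=\{1,3\}$, $J=\{3,4\}$ one has $I'=I$, $J'=J$ but $I-J=\{1\}$), and the proposal never bridges them. Second, ``at most one sign flip in the reduced sequence, i.e., at most one crossing'' is not accurate as stated: when $|I|\neq|J|$ a crossing (or kiss) can occur in the tail common part governed by the $\{i'_{k'},\infty\}$ pair, which is invisible to flips between consecutive reduced indices, so the crossing count must include this boundary contribution. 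Third, and most importantly, the heart of (3) is the claim that, for a non-kissing pair crossing at most once, the presence of an hourglass is \emph{equivalent} to the failure of the block condition $J'\prec I-J\prec J''$; your sketch asserts this (``rules out the remaining local X configuration'') rather than proving it. Note the hourglass is not a local feature of your sign analysis: the two corner nodes may lie far apart on a common anti-diagonal and need not lie on any common part (e.g.\ $(\{1,2,4\},\{3,5,6\})$, whose paths are disjoint away from source and sink), so both implications — hourglass implies not weakly separated, and not weakly separated with at most one crossing and no kiss implies an hourglass — require a genuine argument that is missing.
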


\begin{proof}
By the definitions of the compatibility conditions, the common parts of $p$ and $q$ do not influence anything and each can be reduced to a single point. When the common parts are just single nodes the needed equivalences are easily verified.
\end{proof}

Note that Theorem \ref{thm:path} implies Lemma \ref{lem:ncws}.

\subsection{The cone of GT-patterns}

Recall our definition of real-valued Gelfand-Tsetlin patterns above. The real-valued Gelfand-Tsetlin patterns of shape $L$ form a polyhedral cone in $\RR^{\mathrm{Cells}(L)}$ which we denote $GT_L$. To each path $p$ in $L$ let us associate a GT-pattern $T_p \in GT_L$ which has a $1$ in those cells which lie to the southeast of $p$ and has a zero in those cells which lie to the northwest . Note that $T_p = T_q$ if and only if $p = q$ (this is guaranteed by the presence of the external cells and fails if we consider only internal cells).

It is easy to see that $GT_L$ is the positive span of the vectors $T_p$, as $p$ ranges over paths contained in $L$. For a given element $T$ of $GT_L$, there can be many ways to decompose $T$ into a linear combination of $T_p$. Figure \ref{fig:dr5} shows two different ways to decompose the GT-pattern from Figure \ref{fig:dr3}. Note that in the first way any two participating paths are non-crossing, while in the second way any two participating paths are non-kissing. In the light of Theorem \ref{thm:path} this suggests the following definition and question.

\begin{figure}[h!]
\begin{center}
\input{drf3.pstex_t}
\end{center}
\caption{}\label{fig:dr5}
\end{figure}

Let us say that complex ${\bf m}^{(c)}_L$ \emph{triangulates} $GT_L$ if every element of $GT_L$ can be uniquely decomposed into a positive linear combination of $T_p$, where the participating $p$ form a face in ${\bf m}^{(c)}_L$. In other words, the cone $GT_L$ can be written as the union of interiors of simplicial cones, where the rays of the simplicial cones are the sets $\{ T_p \}_{p \in P}$, with $P$ running over the sets in ${\bf m}^{(c)}_L$. 

\begin{question} \label{q:1}
Which compatibility conditions yield complexes that triangulate $GT_L$?
\end{question}
% 
% \begin{remark}
% It suffices to consider this question for the GT-patterns with integral entries. Indeed, the set of paths used in any decomposition of a pattern $T$ depends only on which entries are equal and which are unequal.
% \end{remark}

We also introduce the notation $GT^0_L$ for the convex hull of the points $T_p$, as $p$ runs over paths contained in $L$. The polytope $GT^0_L$ is a slice of the $GT_L$ by an affine hyperplane. Namely, let $c$ be the cell whose northwest corner is at the origin and whose southeast corner is at $(1,-1)$. For every path $p$, $T_p^c=1$, so $GT^0_L$ is the polytope of Gelfand-Tsetlin patterns that have a $1$ in cell $c$. So, if ${\bf m}^{(c)}_L$ triangulates $GT_L$, then $GT^0_L$ is the union of the interiors of the simplices whose vertices lie at the $c$-compatible sets. In other words, ${\bf m}^{(c)}_L$ encodes the combinatorics of a triangulation of the polytope $GT^0_L$. This allows us to deduce topological properties of ${\bf m}^{(c)}_L$. Namely, if ${\bf m}^{(c)}_L$ triangulates $GT_L$ then ${\bf m}^{(c)}_L$ is homeomorphic to a ball of the same dimension as $GT^0_L$.

This raises the question of the dimension of $GT^0_L$. Let $C$ be the number of connectivity classes of cells of $L$. In $GT^0_L$, the cell whose northwest corner is the origin always contains a $1$ and the cell whose southeast corner is the origin always contains a $0$. So the dimension of $GT^0_L$ is at most $C-2$ and a little more thought shows that this is, in fact, the dimension. We set $N(L)=C-2$. We define a \emph{critical node} of $L$ to be a node which has edges leaving it heading towards the north and east; it is easy to check that $N(L)$ is also the number of critical nodes. Summing up, we have the following.

\begin{proposition}  \label{pro:trigeom}
 If  ${\bf m}^{(c)}_L$ triangulates $GT_L$ then the simplicial complex ${\bf m}^{(c)}_L$ is homeomorphic to a ball of dimension $N(L)$. 
\end{proposition}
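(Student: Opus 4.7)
The plan is to slice the simplicial fan that the hypothesis places on $GT_L$ by the affine hyperplane $H = \{T : T^c = 1\}$, thereby obtaining a genuine geometric triangulation of the convex polytope $GT^0_L$ whose abstract face poset is exactly ${\bf m}^{(c)}_L$. Since every convex polytope is homeomorphic to a closed ball of its own dimension, and the paragraph just above the proposition establishes that $\dim GT^0_L = N(L)$, this reduces the whole problem to setting up the slice correctly.

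Concretely, I would first reformulate the hypothesis that ${\bf m}^{(c)}_L$ triangulates $GT_L$ as saying that the relatively open simplicial cones $\sigma_F := \{\sum_{p \in F} c_p T_p : c_p > 0\}$, indexed by faces $F$ of ${\bf m}^{(c)}_L$, partition $GT_L$. Uniqueness of the positive decomposition prevents two distinct $\sigma_F$ and $\sigma_{F'}$ from sharing a point, which forces their closures to meet only along common faces; that is, the $\overline{\sigma_F}$ form a simplicial fan in $GT_L$. Now intersect with $H$: since $T_p^c = 1$ for every $p$ and since $T_p = T_q$ implies $p = q$ as recorded in the text, the rays of the fan meet $H$ at distinct points $T_p$. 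Each closed cone $\overline{\sigma_F}$ therefore cuts out the geometric simplex on vertex set $\{T_p : p \in F\}$, and these simplices cover $GT_L \cap H = GT^0_L$. This is precisely a geometric realization of ${\bf m}^{(c)}_L$ as a triangulation of the convex polytope $GT^0_L$, so the underlying space of ${\bf m}^{(c)}_L$ is homeomorphic to that polytope, which is a closed ball of dimension $N(L)$.

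The only delicate step is the passage from the set-theoretic ``unique positive decomposition'' hypothesis to a bona fide polyhedral decomposition in which simplicial cones glue along common faces rather than overlapping in some lower-dimensional piece. This is a direct but slightly fussy consequence of uniqueness: any interior overlap of two distinct closed cones $\overline{\sigma_F}$ and $\overline{\sigma_{F'}}$ would manifest as a point admitting two distinct positive decompositions, contradicting the hypothesis. Once this is verified the remainder is standard convex geometry, and I do not anticipate any further obstacle.
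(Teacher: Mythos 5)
Your proposal is correct and follows essentially the same route as the paper: the text preceding the proposition already observes that $GT^0_L$ is the slice of $GT_L$ by the hyperplane $T^c=1$ on which all the $T_p$ lie, so a triangulation of the cone restricts to a triangulation of the polytope $GT^0_L$, which is a ball of dimension $N(L)$. Your extra care about the fan gluing along common faces (forced by uniqueness of the positive decomposition) is a reasonable elaboration of what the paper leaves implicit, not a different argument.
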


\begin{remark}\label{rem:RW}
This remark concerns the relation between this work and that of Reiner and Welker~\cite{RW}. The polytope $GT^0_L$ is the \emph{order polytope} of a certain obvious poset, whose elements are the critical nodes of $L$. The non-nesting triangulation of $GT^0_L$, which we will describe in Section~\ref{sec:Michigan}, is the canonical triangulation of the order polytope. The non-crossing triangulation, described in Section~\ref{sec:Boston}, resembles the equatorial triangulation of the order polytope, in that both are multicones on spheres. However, the non-crossing triangulation is not the same as the equatorial triangulation. As triangulations of $GT^0_L$, they are not equal even when $L$ is the $2 \times 2$ rectangle. As abstract simplicial complexes, the first case we know of where the equatorial and non-crossing complexes are nonisomorphic is when $L$ is the $3 \times 4$ rectangle. In the equatorial complex, there are edges joining each pair of points in the triple $(136, 147, 257)$, but the triangle with these three vertices is not a face of the equatorial complex. In the non-crossing complex, by contrast, the minimal non-faces are always edges. (This example is based on Example~3.22 of~\cite{RW}.)
\end{remark}

\section{Coordinate rings}\label{sec:cr}

For background on material used in this section we refer the reader to \cite[Chapter 14]{MS}. Note that we consider the numbering of rows to be bottom-justified, in order to be consistent with our way of positioning GT-patterns.

Let $X = \{x_{ij}\}$ be an $m$ by $m$ matrix of indeterminates. To each $I \in 2^{\bf m}$ we associate a minor $x_I$ of $X$ with column set $I$ and row set $1,\ldots,|I|$. In \cite{MS}, the algebra generated by all such minors is called the \emph{Pl\"ucker algebra}. The complete flag variety $\mathfrak Fl_m$ is the multi-proj of the Pl\"ucker algebra. The minors under consideration are called the \emph{Pl\"ucker coordinates} of the variety. Similarly, if we take the algebra generated only by minors of certain sizes ${\bf a} = (a_1, \ldots, a_l)$, its multi-proj is the \emph{partial flag variety} $\mathfrak Fl_m^{\bf a}$.

Let $R$ be a subalgebra of a polynomial ring $\CC[x_1,x_2, \ldots, x_N]$. Recall that a subset 
$\{ \phi_1, \ldots, \phi_r \}$ of $R$ is called a \emph{sagbi basis} of $R$ with respect to a given term order if every monomial which occurs as an initial term of an element of $R$ occurs also as an initial term of a monomial in $\{ \phi_1, \ldots, \phi_r \}$. The following theorem appeared in \cite{KM} and is also stated in \cite{MS}. Although it is stated there for complete flags, the proof extends verbatim to the partial case. A \emph{diagonal} or \emph{antidiagonal} term order is any order such that, in any minor, the the diagonal or the antidiagonal monomial (respectively) is the initial term.

% 
% Recall that a basis $(\phi_1, \ldots, \phi_r)$ of a polynomial ring is a \emph{sagbi basis} with respect to a given term order if the initial terms of the basis elements generate all initial terms of all the elements $\phi$ of the ring. The following theorem appeared in \cite{KM} and is also stated in \cite{MS}. Although it is stated there for complete flags, the proof extends verbatim to the partial case. A \emph{diagonal} or \emph{antidiagonal} term order is any order such that out of the monomials appearing in a minor pick either the diagonal or the antidiagonal monomial as the initial term.

\begin{theorem} \cite[Theorem 14.11]{MS} \label{thm:cr}
The Pl\"ucker coordinates (bottom justified minors) form a sagbi basis for the Plucker algebra under any diagonal or antidiagonal term order.
\end{theorem}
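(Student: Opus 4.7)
The plan is to verify the defining sagbi condition: the subalgebra generated by the initial terms $\mathrm{in}_<(x_I)$ should coincide with the initial subalgebra of the entire Pl\"ucker algebra. It suffices to handle the diagonal case, since the antidiagonal case follows by conjugating with the column-reversing permutation $i \mapsto m+1-i$, which sends one generator set to the other and swaps the two term orders. Under a diagonal term order, $\mathrm{in}_<(x_I) = x_{1,i_1} x_{2,i_2} \cdots x_{|I|, i_{|I|}}$ where $I = \{i_1 < \cdots < i_{|I|}\}$, so the semigroup algebra generated by the initial terms is a very concrete toric algebra.

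The cleanest route is through a standard-monomial argument. First, I would establish a straightening law: using the classical quadratic Pl\"ucker relations, every product $p_{I_1}\cdots p_{I_N}$ can be rewritten as a $\mathbb{C}$-linear combination of products whose index sets $(J_1,\ldots,J_N)$ are the columns of a semi-standard Young tableau, modulo terms whose leading exponent vectors are strictly smaller in a well-chosen tiebreaking order. This is the main combinatorial step: one needs to check that in each Pl\"ucker relation exactly one term is the leading term, and the exchange strictly decreases the chosen monomial statistic. Once this is done, the standard monomials span the Pl\"ucker algebra.

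Next comes the key observation about leading terms: the initial monomial of a standard product $\prod_r \mathrm{in}_<(p_{I_r})$ determines the underlying SSYT uniquely, because reading the column indices $i^{(r)}_j$ off the exponents in each row $j$ recovers the tableau entries. Hence the initial terms of distinct standard monomials are distinct, so the standard monomials are linearly independent; together with the straightening step they form a $\mathbb{C}$-basis of the Pl\"ucker algebra. Moreover, every monomial in $\{\mathrm{in}_<(p_I)\}$ is the initial term of a standard monomial, so the subalgebra generated by $\{\mathrm{in}_<(p_I)\}$ matches the initial algebra monomial-for-monomial, which is exactly the sagbi property.

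For the partial-flag version there is nothing new to do: one simply restricts to those $I$ with $|I| \in \{a_1,\ldots,a_l\}$ and to SSYTs whose column lengths lie in this set, and every step of the argument above applies verbatim. The main obstacle is the straightening lemma and the uniqueness of leading terms in a Pl\"ucker relation; both are classical and are carried out in detail in Miller--Sturmfels \cite[Ch.~14]{MS}, which is why the authors are content to cite the result rather than reprove it here.
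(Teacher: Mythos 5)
The paper does not actually prove this statement: it is imported from Kogan--Miller and Miller--Sturmfels (\cite{KM}, \cite[Theorem 14.11]{MS}), with the only added remark being that the argument extends verbatim to partial flags, so there is no internal proof to compare against. Your sketch is the classical standard-monomial route (straightening into semistandard products, plus the fact that distinct semistandard products have distinct diagonal initial terms, so the initial algebra is generated by the $\mathrm{in}_<(p_I)$), which is essentially the argument of the cited sources and is correct in outline; like the paper, you defer the two substantive steps---the straightening law with control of leading terms, and the injectivity of the map from SSYT to diagonal monomials (which relies on weakly increasing rows so that the tableau can be reassembled from its row multisets)---to \cite{MS}, and the reduction of the antidiagonal case via the column-reversing permutation and the restriction to column lengths in $\{a_1,\ldots,a_l\}$ for the partial flag case are both fine.
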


The reader may also want to consult~\cite[Chapter~11]{Sturm}, where sagbi bases are referred to as canonical bases.

The \emph{diagonal semigroup} $\mathfrak D_m^{\bf a}$ is generated by exponent matrices of diagonal terms of Pl\"ucker coordinates of $\mathfrak Fl_m^{\bf a}$. For example, if $m=6$ and ${\bf a} = (2,3,5)$, then some of the generators of   $\mathfrak D_m^{\bf a}$ can be seen below.

%\[ 
%\underbrace{\tableau{ {}&{}&{}&{}&{1}&{} \\ {}&{}&{}&{1}&{}&{} \\ {}&{}&{1}&{}&{}&{} \\ {}&{1}&{}&{}&{}&{} \\ {1}&{}&{}&{}&{}&{}\\ {}&{}&{}&{}&{}&{} },  
%\tableau{ {}&{}&{}&{}&{}&{1} \\ {}&{}&{}&{1}&{}&{} \\ {}&{}&{1}&{}&{}&{} \\ {}&{1}&{}&{}&{}&{} \\ {1}&{}&{}&{}&{}&{}\\ {}&{}&{}&{}&{}&{} }, \ldots}_{\binom{6}{5}},
%\underbrace{\tableau{{}&{}&{1}&{}&{}&{} \\ {}&{1}&{}&{}&{}&{} \\ {1}&{}&{}&{}&{}&{} \\{}&{}&{}&{}&{}&{} \\ {}&{}&{}&{}&{}&{} \\ {}&{}&{}&{}&{}&{}}, 
%\tableau{ {}&{}&{}&{1}&{}&{} \\ {}&{1}&{}&{}&{}&{} \\ {1}&{}&{}&{}&{}&{}\\ {}&{}&{}&{}&{}&{} \\ {}&{}&{}&{}&{}&{} \\ {}&{}&{}&{}&{}&{} },\ldots}_{\binom{6}{3}},
%\underbrace{\tableau{{}&{1}&{}&{}&{}&{} \\ {1}&{}&{}&{}&{}&{}\\ {}&{}&{}&{}&{}&{} \\ {}&{}&{}&{}&{}&{} \\ {}&{}&{}&{}&{}&{} \\ {}&{}&{}&{}&{}&{}}, \tableau{ {}&{}&{1}&{}&{}&{} \\ {1}&{}&{}&{}&{}&{}\\ {}&{}&{}&{}&{}&{} \\ {}&{}&{}&{}&{}&{} \\ {}&{}&{}&{}&{}&{} \\ {}&{}&{}&{}&{}&{}},\ldots}_{\binom{6}{2}}.
%\]
\[ 
\underbrace{\tableau{{}&{}&{}&{}&{}&{} \\ {}&{}&{}&{}&{1}&{} \\ {}&{}&{}&{1}&{}&{} \\ {}&{}&{1}&{}&{}&{} \\ {}&{1}&{}&{}&{}&{} \\ {1}&{}&{}&{}&{}&{}},  
\tableau{{}&{}&{}&{}&{}&{} \\ {}&{}&{}&{}&{}&{1} \\ {}&{}&{}&{1}&{}&{} \\ {}&{}&{1}&{}&{}&{} \\ {}&{1}&{}&{}&{}&{} \\ {1}&{}&{}&{}&{}&{}}, \ldots}_{\binom{6}{5}},
\underbrace{\tableau{{}&{}&{}&{}&{}&{} \\ {}&{}&{}&{}&{}&{} \\ {}&{}&{}&{}&{}&{} \\ {}&{}&{1}&{}&{}&{} \\ {}&{1}&{}&{}&{}&{} \\ {1}&{}&{}&{}&{}&{}}, 
\tableau{{}&{}&{}&{}&{}&{} \\ {}&{}&{}&{}&{}&{} \\ {}&{}&{}&{}&{}&{} \\ {}&{}&{}&{1}&{}&{} \\ {}&{1}&{}&{}&{}&{} \\ {1}&{}&{}&{}&{}&{}},\ldots}_{\binom{6}{3}},
\underbrace{\tableau{{}&{}&{}&{}&{}&{} \\ {}&{}&{}&{}&{}&{} \\ {}&{}&{}&{}&{}&{} \\ {}&{}&{}&{}&{}&{} \\ {}&{1}&{}&{}&{}&{} \\ {1}&{}&{}&{}&{}&{}}, \tableau{{}&{}&{}&{}&{}&{} \\ {}&{}&{}&{}&{}&{} \\ {}&{}&{}&{}&{}&{} \\ {}&{}&{}&{}&{}&{} \\ {}&{}&{1}&{}&{}&{} \\ {1}&{}&{}&{}&{}&{}},\ldots}_{\binom{6}{2}}.
\]

\begin{theorem} \label{thm:semi}
Semigroups $\mathfrak D_m^{\bf a}$ and $GT_L$ are isomorphic, where $L \subset L_m$ is a partial staircase shape with sinks $(m-a_1,a_1), \ldots, (m-a_l,a_l)$. 
\end{theorem}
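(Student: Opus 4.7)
The plan is to construct an explicit semigroup isomorphism $\Phi \colon GT_L \to \mathfrak{D}_m^{\mathbf{a}}$ via a ``discrete derivative''. The starting observation is that for $I = \{i_1 < \cdots < i_k\}$, the matrix position $(s, i_s)$ where the diagonal term of $x_I$ places a $1$ corresponds to the vertical edge of $L_m$ from $(i_s - s, s-1)$ to $(i_s - s, s)$, which is exactly the $s$-th N-step of the path $p(I)$. Under this identification, the matrix positions supporting generators $\mathbf{e}_I$ with $|I| \in \mathbf{a}$ correspond to the vertical edges of the partial staircase $L$. Now define $\Phi(T)_{r,c} = T(C_E) - T(C_W)$, where $C_E$ and $C_W$ are the cells flanking the vertical edge indexed by $(r,c)$, with external cells taking their connectivity-class values (west-external being $0$). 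The key local identity is $\Phi(T_{p(I)}) = \mathbf{e}_I$, since the indicator pattern $T_{p(I)}$ jumps from $0$ to $+1$ exactly across the vertical edges crossed as N-steps by $p(I)$.

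To promote $\Phi$ to a semigroup isomorphism, I will verify that $\Phi(GT_L) \subseteq \mathfrak{D}_m^{\mathbf{a}}$, that $\Phi$ is injective, and that every $M \in \mathfrak{D}_m^{\mathbf{a}}$ has a preimage in $GT_L$. The containment uses a standard ``peeling'' decomposition: given an integer pattern $T \in GT_L$ with maximum value $N$, for $i = 1, \ldots, N$ the region $\{T \geq i\}$ is SE-closed by GT-monotonicity, so its NW boundary is a monotone NE lattice path $p_i$ in $L$, giving $T = \sum_i T_{p_i}$ and hence $\Phi(T) = \sum_i \mathbf{e}_{I_i} \in \mathfrak{D}_m^{\mathbf{a}}$ with $p_i = p(I_i)$. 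Injectivity follows from reconstruction: starting from the west-external cells where $T \equiv 0$, the value of $T$ at any cell can be recovered as a partial row-sum of entries of $\Phi(T)$ moving eastward. For the inverse, any $M = \sum c_I \mathbf{e}_I \in \mathfrak{D}_m^{\mathbf{a}}$ has the unique preimage $\sum c_I T_{p(I)} \in GT_L$ (a non-negative integer combination of indicator GT-patterns), and the linearity of $\Phi$ promotes the bijection to a semigroup isomorphism.

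The main obstacle is verifying, within the peeling, that each path $p_i$ in fact lies in $L$---meaning it terminates at one of the designated sinks $(m - a_j, a_j)$ rather than at some other point on the NE boundary. This requires the partial-staircase structure: each external connectivity class of $L$ has a single prescribed value, so the boundary path $p_i$ can only transition between ``in'' and ``out'' at places where two distinct external classes meet, and for a partial staircase these transition points are exactly the chosen sinks. Once this is verified, the semigroup isomorphism $\Phi$ follows.
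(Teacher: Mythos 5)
Your proposal is correct, and at its core it is the same isomorphism as in the paper: your ``discrete derivative'' $\Phi$, taking differences of a GT-pattern across vertical edges, is exactly the inverse of the paper's lattice automorphism $g_{i,j}=\sum_{j\le i'\le i+j-1}d_{i',j}$ (partial row sums), and both proofs hinge on the generator correspondence $x_I\mapsto T_{p(I)}$, i.e.\ $\Phi(T_{p(I)})=\mathbf{e}_I$. Where you differ is in how the semigroup-level statement is verified. The paper argues as in \cite[Theorem 14.23]{MS}: exhibit an automorphism of an ambient $\Z^N$ that bijects the two generating sets, and stop there; in particular it leaves implicit the fact that the integer points of $GT_L$ are nonnegative integer combinations of the $T_p$ (which in this paper is only supplied later, by the driving-rule decompositions of Section \ref{sec:driving}). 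You prove that fact directly by level-set peeling of $\{T\ge i\}$, and you correctly isolate the one genuinely delicate point -- that each layer boundary is a path of $L$ ending at one of the designated sinks -- which is precisely what the external-cell and connectivity conventions were designed to guarantee (the paper says as much only in the remark following the proof). Two small points of care in your sketch: only the west-external classes have a \emph{prescribed} value ($0$); the other external classes are merely constant on each class, but constancy is all your transition argument needs; and to see that every layer boundary emanates from the source you should note that, by column monotonicity, the south/east external class carries the maximum entry of $T$ while the west classes carry $0$. With those details spelled out, your argument buys a self-contained verification (containment, injectivity via row-sum reconstruction, surjectivity by linearity), at the cost of redoing work the paper outsources to \cite{MS} and to the driving rules.
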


\begin{proof}
We proceed as in the proof of \cite[Theorem 14.23]{MS}, so that it suffices to exhibit an automorphism of a group $\mathbb Z^{N}$ containing $\mathfrak D_m^{\bf a}$ and $GT_L$ which induces a bijection between their generating sets. If $d_{i,j}$ is the entry in a cell $(i,j)$ of an element of $\mathfrak D_m^{\bf a}$, and $g_{i,j}$ is the entry in a cell $(i,j)$ of an element of  $GT_L$, then the desired automorphism is given by $g_{i,j} = \sum_{j \leq i' \leq i+j-1} d_{i',j}$. It is not hard to check that this is indeed an automorphism, and that it provides the needed bijection of generators: $x_I \mapsto T_{p(I)}$.

For example, the GT-pattern in Figure \ref{fig:dr3} is the image of the following element of $\mathfrak D_6^{(2,3,5)}$:
%\[ 
%\tableau{ {}&{}&{}&{1}&{1}&{2} \\ {}&{1}&{1}&{2}&{1}&{} \\ {3}&{}&{2}&{}&{}&{}\\ {}&{}&{}&{}&{}&{} \\ {}&{}&{}&{}&{}&{} \\ {}&{}&{}&{}&{}&{} }.
%\]
\[ 
\tableau{{}&{}&{}&{}&{}&{} \\ {}&{}&{}&{}&{}&{} \\ {}&{}&{}&{}&{}&{} \\ {}&{}&{}&{1}&{1}&{2} \\ {}&{1}&{1}&{2}&{1}&{} \\ {3}&{}&{2}&{}&{}&{}}.
\]
\end{proof}

The complicated rules regarding external cells and connectivity were set up exactly to make Theorem~\ref{thm:semi} hold.

We define a \emph{${\bf m}^{(c)}_L$-tableau  supported on face $F$} to be a collection of vertices of ${\bf m}^{(c)}_L$ that belong to $F$, such that each element of $F$ occurs at least once. The terminology is motivated by the following observation: a $\mnnL$-tableau is precisely the columns of a semistandard tableau whose entries and column lengths are restricted by $L$. For example, the first decomposition of a GT-pattern on Figure \ref{fig:dr5} is non-crossing, which means that the sets involved form a $\mnnL$-tableau. When arranged into a Young shape the sets form the columns of the following semistandard tableau:
\setcellsize{13}
\[ 
\tableau{{1}&{1}&{1}&{3}&{3} \\ {2}&{3}&{4}&{4}&{5} \\ {4}&{5}&{6}&{6}}.
\]
The shape $L$ in this case induces the restriction that columns must have $2$, $3$, or $5$ rows and that entries are no greater than $6$.

A different choice of compatibility condition yields  different kinds of tableaux. For example, the following tableau corresponds to the second decomposition of the GT-pattern in Figure \ref{fig:dr5}, and is an example of a $\mncL$-tableau.
\[ 
\tableau{{1}&{1}&{3}&{3}&{1} \\ {3}&{5}&{4}&{4}&{2} \\ {4}&{6}&{5}&{6}}.
\]
Note that, in the non-crossing case, there is no obvious order in which to arrange the columns. Our standard practice is to order them lexicographically.

To each ${\bf m}^{(c)}_L$-tableaux $T$ one can associate a monomial in Pl\"ucker variables by taking product of Pl\"ucker coordinates $x_T = \prod_{I \in T} x_I$ over all columns $I$ of $T$. We call such $x_T$ the \emph{${\bf m}^{(c)}_L$-monomials}. The following corollary of Theorem \ref{thm:cr} provides motivation for Question \ref{q:1}.

\begin{corollary} \label{cor:cr}
If ${\bf m}^{(c)}_L$ triangulates the cone $GT_L$, the ${\bf m}^{(c)}_L$-monomials form a linear basis for the corresponding subalgebra of the Pl\"ucker algebra.
\end{corollary}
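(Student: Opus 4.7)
The plan is to combine the sagbi basis Theorem \ref{thm:cr} with the semigroup identification of Theorem \ref{thm:semi} and the triangulation hypothesis. By Theorem \ref{thm:cr}, the Pl\"ucker coordinates form a sagbi basis under a diagonal term order, so the Pl\"ucker subalgebra associated to $L$ flatly degenerates to the semigroup algebra $\CC[\mathfrak D_m^{\bf a}]$. A standard sagbi principle then asserts that a collection of monomials in the Pl\"ucker coordinates is a $\CC$-basis of the Pl\"ucker algebra if and only if the map sending each monomial to the exponent vector of its initial term is a bijection onto $\mathfrak D_m^{\bf a}$; this is because the monomials $\{X^v\}_{v \in \mathfrak D_m^{\bf a}}$ are a basis of the initial algebra, and a sagbi lift carries any such basis to a basis of the original algebra.

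Via Theorem \ref{thm:semi}, the initial exponent of $x_I$ is identified with $T_{p(I)} \in GT_L$, so the initial exponent of the ${\bf m}^{(c)}_L$-monomial $x_T = \prod_{I \in T} x_I$ becomes $\Phi(T) := \sum_{I \in T} T_{p(I)}$. The corollary thus reduces to showing that the map $\Phi$, from the set of ${\bf m}^{(c)}_L$-tableaux to the semigroup $\mathfrak D_m^{\bf a} \subset GT_L$, is a bijection. Injectivity of $\Phi$ follows immediately from the triangulation property: two tableaux with the same image would provide two distinct positive-real decompositions of a common element of the cone $GT_L$ with support on faces of ${\bf m}^{(c)}_L$, contradicting the uniqueness clause in the definition of ``triangulates''.

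For surjectivity, a given $v \in \mathfrak D_m^{\bf a}$ has, by the triangulation hypothesis, a unique positive-real decomposition $v = \sum \lambda_p T_p$ supported on a face of ${\bf m}^{(c)}_L$; it remains to show that each $\lambda_p \in \mathbb{N}$. This is the main obstacle, and it amounts to the unimodularity of the triangulation: the rays $\{T_p\}$ of each maximal simplicial cone must form a $\ZZ$-basis of the ambient lattice they span. This property is not a formal consequence of the real-valued triangulation condition stated in Question \ref{q:1}, but it is true for the specific compatibility triangulations constructed in Sections \ref{sec:Michigan} and \ref{sec:Boston}, thanks to the $\{0,1\}$-form of the vectors $T_p$ and the explicit combinatorics of their faces in each case. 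Once unimodularity is granted, $\Phi$ becomes a bijection and the corollary follows from the sagbi basis criterion above.
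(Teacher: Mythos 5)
Your overall skeleton is the intended one: Theorem \ref{thm:cr} identifies the initial algebra (for a diagonal term order) with the semigroup algebra of $\mathfrak D_m^{\bf a}$, Theorem \ref{thm:semi} identifies that semigroup with the lattice points generated by the $T_p$ in $GT_L$, and the corollary reduces to showing that the map sending an ${\bf m}^{(c)}_L$-tableau $T$ to $\sum_{I\in T}T_{p(I)}$ is a bijection onto the semigroup; injectivity does follow exactly as you say from the uniqueness clause in the definition of ``triangulates.'' You also correctly isolate the real issue, namely surjectivity, i.e.\ that an integral pattern decomposes \emph{integrally} along a face, which is indeed not a formal consequence of the real-cone triangulation condition.

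The gap is that you do not actually close this step. Asserting unimodularity ``thanks to the $\{0,1\}$-form of the vectors $T_p$'' is not an argument: simplices spanned by $0/1$ vectors can have normalized volume greater than $1$ (the tetrahedron on $(0,0,0),(1,1,0),(1,0,1),(0,1,1)$ inside the $3$-cube is the standard example, and cubical faces of this type do occur in the polytopes $GT^0_L$, since they are order polytopes of posets containing $3$-element antichains). So for the lattice point $T_p+T_q$ with $p,q$ corresponding to opposite corners of such a cube, a non-unimodular triangulation would produce only half-integral coefficients, and the proposed basis would fail to span; your proof gives no reason this cannot happen for the complexes at hand. Moreover, by retreating to ``the specific triangulations of Sections \ref{sec:Michigan} and \ref{sec:Boston}'' you are no longer proving the corollary as stated. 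The clean way to supply the missing step, within the paper's own framework, is Lemma \ref{lem:drtri}: for a driving rule $r$, an \emph{integral} pattern $T$ is resolved by the rule itself into an integral multiset of paths, so $T=\sum_{p\in P^{(r)}(T)}T_p$ is already a nonnegative \emph{integer} combination supported on the face $\overline{P}^{(r)}(T)$ of $\psi^{(r)}(GT_L)$; this gives surjectivity (equivalently, unimodularity of these triangulations, which is what Section \ref{sec:regular} later invokes for $\mncL$) with no extra volume computation. Either argue along these lines, or make explicit that ``triangulates'' is to be read as including unique nonnegative \emph{integral} decomposition of integral patterns; as written, your proof asserts precisely the point that needs proof.
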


\begin{remark}
In the particular case of $\mnnL$ one just recovers the ``standard monomial basis", labeled by the semi-standard tableaux.
\end{remark}

This correspondence allows us to deduce that all triangulating compatibility complexes share a certain combinatorial invariant. For a $d$-dimensional simplicial complex, let $f_i$ denote the number of $i$-dimensional faces, and collect these numbers in the \emph{$f$-vector} $(1, f_0,\ldots,f_d)$.

\begin{corollary}\label{cor:fvec}
If ${\bf m}^{(c)}_L$ triangulates the cone $GT_L$, the Poincar\'e series of the corresponding subring of the Pl\"ucker algebra is given by \[\sum_{i} f_i({\bf m}^{(c)}_L) \sum_{j \geq i+1} \binom{j-1}{i} t^j.\] All complexes ${\bf m}^{(c)}_L$ triangulating $GT_L$ have the same $f$-vector.
\end{corollary}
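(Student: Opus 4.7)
My plan is to combine the basis theorem of the previous corollary with a straightforward count of monomials, and then observe that the resulting relation between the $f$-vector and the Poincar\'e series is invertible.

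First, by Corollary~\ref{cor:cr}, the ${\bf m}^{(c)}_L$-monomials form a $\CC$-linear basis of the relevant subring of the Pl\"ucker algebra, graded by degree in the Pl\"ucker coordinates. Every ${\bf m}^{(c)}_L$-tableau $T$ is, by definition, a multiset of vertices of ${\bf m}^{(c)}_L$ whose underlying set of distinct vertices is a face $F$ of ${\bf m}^{(c)}_L$; conversely, every multiset on the vertices of a face $F$ in which each vertex of $F$ occurs at least once is a ${\bf m}^{(c)}_L$-tableau supported on $F$. Thus basis monomials are partitioned according to their supporting face.

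Second, a face of dimension $i$ has $i+1$ vertices, and a multiset of total size $j$ on an $(i+1)$-element set with each element appearing at least once is the same as a composition of $j$ into $i+1$ positive parts, counted by $\binom{j-1}{i}$ (with the convention $\binom{-1}{-1}=1$ covering the empty face and the degree~$0$ constant monomial). Summing over faces grouped by dimension gives
\[
\dim_{\CC}(\text{degree $j$ piece}) \;=\; \sum_{i} f_i({\bf m}^{(c)}_L)\binom{j-1}{i},
\]
and multiplying by $t^j$ and summing over $j$ yields the stated Poincar\'e series.

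For the last assertion, I would rewrite the formula via the identity $\sum_{j\geq i+1}\binom{j-1}{i}t^j = t^{i+1}/(1-t)^{i+1}$, so that under the substitution $u=t/(1-t)$ the Poincar\'e series becomes the polynomial $\sum_i f_i({\bf m}^{(c)}_L)\, u^{i+1}$. The coefficients of this polynomial are uniquely determined by the Poincar\'e series, which depends only on the subring of the Pl\"ucker algebra and not on the choice of triangulation. Hence any two triangulating compatibility complexes ${\bf m}^{(c)}_L$ have identical $f$-vectors.

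I do not anticipate a serious obstacle: the only place to be careful is the bookkeeping of the empty face versus the degree~$0$ constant monomial, and the observation that the linear map sending $f$-vectors to Poincar\'e series is injective (which is immediate once the series is rewritten as a polynomial in $u=t/(1-t)$).
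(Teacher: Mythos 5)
Your proof is correct and follows the same main line as the paper: both arguments partition the basis monomials of Corollary~\ref{cor:cr} according to the face supporting the corresponding tableau, and count degree-$j$ tableaux on an $i$-dimensional face by compositions of $j$ into $i+1$ positive parts, giving the coefficient $\binom{j-1}{i}$ exactly as in the paper's computation of $r_{i,j}$. The only divergence is in the final step, where the $f$-vector must be recovered from the Poincar\'e series: the paper does this by an asymptotic argument (as $j\to\infty$ the top-dimensional term dominates, so one reads off the number of facets, subtracts that contribution, and iterates), whereas you rewrite the series as the polynomial $\sum_i f_i({\bf m}^{(c)}_L)\,u^{i+1}$ in $u=t/(1-t)$ and use the invertibility of that substitution on formal power series without constant term. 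Both are valid; your version exhibits the injectivity of the linear map from $f$-vectors to Poincar\'e series in one step, while the paper's argument avoids any change of variables. Your explicit handling of the empty face versus the degree-zero constant monomial is a harmless bookkeeping point that the paper leaves implicit.
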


\begin{proof}
We first prove the formula for the Poincar\'e series. 

Grade tableaux by the number of columns, and let $r_{i,j}$ denote the number of degree $j$ tableaux $T$ supported on an $i$-dimensional face $F$. Clearly then the Poincar\'e series is given by:
\[ \sum_{i} f_i({\bf m}^{(c)}_L) \sum_{j \geq i+1} r_{i,j} t^j.\]
To obtain the desired formula notice that $r_{i,j}$ simply counts the ways to split a collection of $j$ columns into $i+1$ nonempty subsets, and so we have $r_{i,j} = \binom{j-1}{i}$.

To verify the second statement, observe that as $j\to \infty$, the term in the expression corresponding to the largest $i$ dominates other terms. This allows us to uniquely recover the number of facets in ${\bf m}^{(c)}_L$. Once that has been recovered, we can subtract the corresponding term and look at the asymptotics of the remainder, and so on. 
\end{proof}

\begin{remark}\label{rem:gamma}
The \emph{$h$-vector} $(h_0,\ldots,h_{d+1})$ is a transformation of $f$-vector given by: $h(t) := \sum_{i=0}^{d+1} f_{i-1} t^i(1-t)^{d+1-i} = \sum_{i=0}^{d+1} h_i t^i.$ The polynomial $h(t)$ is called the \emph{$h$-polynomial}. Whenever a polynomial is symmetric (i.e., $t^{d+1}h(1/t) = h(t)$), it has a so-called \emph{$\gamma$-vector} given by its coefficients in terms of the basis $\{ t^i(1+t)^{d+1-2i}\}_{0\leq i\leq (d+1)/2}$. Recently many have studied complexes whose $h$-polynomials are \emph{$\gamma$-nonnegative}, as this immediately implies symmetry and unimodality of $h(t)$. Every compatibility complex that triangulates $GT_L$ has a $\gamma$-nonnegative $h$-polynomial. Indeed, by Corollary \ref{cor:fvec} it suffices to consider $h(\mnnL;t)$, which (see Remark \ref{rem:RW}) is the $h$-polynomial of the canonical triangulation of the order polytope of a certain poset $P_L$ depending only on the partial staircase $L$. By a result of Reiner and Welker \cite[Prop. 2.2]{RW}, this $h$-polynomial is nothing but the $W$-polynomial of the poset $P_L$, and a result of Br\"and\'en \cite[Thm. 4.2]{B} shows that $W(P_L;t) = h(\mnnL;t)$ is $\gamma$-nonnegative.
\end{remark}

\section{Driving rules}\label{sec:driving}

Let $T \in GT_L$ and let $T^{i,j}$ be the entry in the cell located at column $i$, row $j$ of $T$. Split the edge between cells with coordinates $(i,j)$ and $(i,j+1)$ into $T^{i,j}-T^{i,j+1}$ edges. Similarly split the edge between cells with coordinates $(i,j)$ and $(i+1,j)$ into $T^{i+1,j}-T^{i,j}$ edges. Orient all horizontal edges east and all vertical edges north. Then, for a given $T$, each node of $L$ has some incoming flow from south and west and some outgoing flow to north and east. Note that since \[(T^{i,j}-T^{i,j+1})+(T^{i+1,j}-T^{i,j}) = (T^{i+1,j+1}-T^{i,j+1})+(T^{i+1,j}-T^{i+1,j+1}),\] the total incoming flow is always equal to the total outgoing flow. 
% To have the flows well-defined on the boundary of $L$ we fill the cells surrounding $L$ with integers so that 
% \begin{enumerate}
%   \item the numbers still weakly increase in rows and columns,
%   \item the numbers to the west of $L$ are zero, %equal to each other,
%   \item the numbers to the south of $L$ are equal to each other,
%   \item the numbers along north-eastern boundary of the shape are equal if they are not separated by any sinks, and thus are determined by the filling of external cells.
% \end{enumerate}
% Such a filling always exists. 
Denote $E(T)$ the resulting configuration of edges. An example of the procedure for the GT pattern from Figure \ref{fig:dr3} can be seen on Figure \ref{fig:dr6}. 

\begin{figure}[h!]
\begin{center}
\input{drf4.pstex_t}
\end{center}
\caption{}\label{fig:dr6}
\end{figure}

Let us say that we \emph{resolve} a node of $E(T)$ if we give a bijection between the incoming and outgoing edges, manifested graphically in joining the edges by lines. By comparing Figures \ref{fig:dr5} and \ref{fig:dr6} we see that both sets of paths in Figure \ref{fig:dr5} can be obtained by resolving nodes in the appropriate manner. In fact, we have the following. 

\begin{lemma}
Any resolution of all nodes of $E(T)$ yields a decomposition of $T$ into $\{T_p\}$. Any decomposition can be obtained in this way.
\end{lemma}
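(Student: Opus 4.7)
The plan is to translate between resolutions and decompositions via a tracing procedure: a resolution partitions the weighted edge configuration $E(T)$ into a multiset of lattice paths from the source to sinks, and this multiset encodes exactly a decomposition of $T$; conversely, any decomposition prescribes the matching at each node.

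First I would handle the forward direction. Given a resolution, starting from any edge and repeatedly following the matching at each head node produces a trace that cannot cycle (all edges are oriented strictly N or E) and therefore terminates; tracing backward likewise terminates. The only node with no incoming edges is the source, and the only termination points are sinks of $L$---here the external-cell conventions are what guarantee termination at sinks and not at other boundary nodes. So the resolution partitions $E(T)$ into a multiset of source-to-sink paths $p_1, \dots, p_N$. Then $T = \sum_k T_{p_k}$ follows by comparing edge multiplicities: each edge $e$ has multiplicity $[e \in p]$ in $E(T_p)$, immediate from $T_p^{i,j} - T_p^{i,j+1} \in \{0,1\}$ and the analogous horizontal increments, so $\sum_k E(T_{p_k})$ agrees with $E(T)$ edge-by-edge. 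Since GT-patterns are determined by their edge increments together with the boundary value zero on the westernmost external cells (satisfied by both sides), the edge-level equality promotes to a pattern-level identity.

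For the converse, given a decomposition $T = \sum_p c_p T_p$ with $c_p \in \ZZ_{\geq 0}$, take the multiset of paths containing $c_p$ copies of each $p$. At each internal node $v$, each path through $v$ contributes a pair (incoming edge, outgoing edge), and these pairs match the incoming edges and outgoing edges of $E(T)$ at $v$ with the correct multiplicities, by the edge-multiplicity computation above. Declaring these pairs to be the matching at $v$ gives a resolution whose traced paths recover the original decomposition.

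The hard part will be the boundary accounting: verifying that traces terminate precisely at the designated sinks (and not at concave corners in the shape of a partial staircase), and that the external-cell conventions ensure a GT-pattern is really determined by its edge increments. The paper flags after Theorem \ref{thm:semi} that the external-cell and connectivity conventions were engineered exactly so that such accounting works out; once those conventions are carefully unpacked, the combinatorial core of the lemma---the matching at each internal node---is routine.
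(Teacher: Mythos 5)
Your proposal is correct and follows essentially the same route as the paper: trace paths through the resolved nodes to get a multiset of source-to-sink paths, verify $T=\sum T_p$ by comparing the edge increments $T^{i,j}-T^{i,j+1}$ and $T^{i+1,j}-T^{i,j}$ contributed by each path, and conversely overlay a given system of paths on $E(T)$ and read off the node bijections (the paper phrases this via coloring the paths). The boundary/termination accounting you flag is real but routine, and the paper likewise leaves it implicit.
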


\begin{proof}
Once we have resolved all nodes, we obtain a family of paths $\{p\}$ by following edges from the source according to corresponding bijections at nodes. The sum of the resulting $T_p$ does give the starting $T$ since each $T_p$ contributes $1$ to differences $T^{i,j}-T^{i,j+1}$ and $T^{i+1,j}-T^{i,j}$ exactly when it passes between the corresponding cells. 

On the other hand, if we start with a system of paths such that $T = \sum T_p$, color each path a unique color and place it on $L$. This picture clearly coincides with $E(T)$ away from the nodes, while at each node we can choose the bijection that matches colors of incoming and outgoing edges.
\end{proof}

Note that we have a certain degree of freedom when going back from a family of paths to a resolution of nodes. Namely, for a fixed node and a fixed direction, we can choose the order in which to place the edges coming from particular paths. We shall say that we choose a \emph{planar edge embedding} when we make a choice of how exactly to order the edges. 

We are now interested in resolving nodes according to certain local rules. Because of the resemblance of incoming and outgoing flows with driving flows on roads, we call these \emph{driving rules}. 

For a driving rule $r$ and an element $T \in GT_L$ we let $P^{(r)}(T)$ be the collection of paths resulting from resolving $E(T)$ according to $r$. We treat $P^{(r)}(T)$ as a multiset rather than set, and we let $\overline{P}^{(r)}(T)$ denote the corresponding set of {\it {distinct}} paths obtained after resolution. The next lemma is obvious in light of this discussion; it can almost be taken as the definition of a driving rule.

\begin{lemma}[Fundamental lemma of driving rules] \label{lem:drtri}
For any driving rule $r$, we have \[T = \sum_{p \in P^{(r)}(T)} T_p.\]
\end{lemma}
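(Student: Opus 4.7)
The plan is to derive this as an essentially immediate corollary of the preceding (unnumbered) lemma, which already establishes that any resolution of all nodes of $E(T)$ yields a valid decomposition $T = \sum_p T_p$. My approach will be to observe that a driving rule is, by its very description, a consistent local recipe for choosing the bijection between incoming and outgoing edges at each node of $E(T)$; thus the multiset $P^{(r)}(T)$ of paths produced by applying $r$ to $E(T)$ arises from a particular complete resolution. Invoking the preceding lemma for this specific resolution then yields the identity $T = \sum_{p \in P^{(r)}(T)} T_p$.

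To expand on why the preceding lemma holds (since it is what carries the substance), I would isolate two observations. First, the assignment $T \mapsto E(T)$ is linear in the entries of $T$, and for a single-path pattern $T_p$ the edge graph $E(T_p)$ consists of exactly the edges of $p$, each with multiplicity one: between two cells $p$ separates, $T_p$ jumps from $1$ to $0$, and between cells on the same side of $p$ it does not change, so every edge of $p$ contributes multiplicity $1$ and every other edge contributes $0$. Second, $T$ is recovered uniquely from its edge graph by integrating the east- and north-pointing differences starting from the external zero cells on the west boundary, so $T \mapsto E(T)$ is injective on Gelfand--Tsetlin patterns. Tracing a resolution from the source partitions the edges of $E(T)$ into a multiset of paths whose union, as an edge configuration, exactly equals $E(T)$; hence $E\bigl(\sum_p T_p\bigr) = \sum_p E(T_p) = E(T)$, and injectivity forces $\sum_p T_p = T$.

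I do not expect any real obstacle; as the authors themselves note, the lemma is ``almost the definition'' of a driving rule. The only point deserving a brief check is that the local recipe actually gives a well-defined bijection between incoming and outgoing edges at every node, which is guaranteed by the flow-conservation identity displayed in the setup (equal totals entering from south/west and leaving to north/east). Once that is in hand, the statement follows with no further computation.
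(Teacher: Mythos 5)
Your proposal matches the paper's treatment: the paper offers no separate proof of this lemma, noting it is ``obvious in light of this discussion,'' i.e.\ it follows at once because a driving rule is by definition a way of resolving every node of $E(T)$, and the preceding unnumbered lemma shows any such resolution yields $T=\sum_p T_p$. Your added justification of that unnumbered lemma (linearity of $T\mapsto E(T)$, each edge of $p$ appearing with multiplicity one in $E(T_p)$, and recovery of $T$ from its differences using the zero western boundary) is just a slightly more explicit packaging of the paper's own observation, so the approach is essentially identical and correct.
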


This lemma means that for any driving rule $r$ there is well-defined map $\psi^{(r)}$ from $GT_L$ to the set of subsets of $2^{\bf m}$ given by $T \mapsto$ $\{$sets corresponding to paths of $\overline{P}^{(r)}(T)\}$. We say that $r$ is a \emph{simplicial} driving rule whenever the image, $\psi^{(r)}(GT_L)$, is a simplicial complex, which happens for example if for every $T$ in $GT_L$ we have \[P^{(r)}(T) -\{p\} = P^{(r)}( T-T_p),\] for all $p$ in $P^{(r)}(T)$. We can then think of $\psi^{(r)}(GT_L)$ as a compatibility complex, though the specific conditions imposed on pairs of sets may not be easy to describe. Nonetheless, the following gives a partial answer to Question \ref{q:1}.

\begin{proposition}\label{prp:simplicial}
If $r$ is a simplicial driving rule, $\psi^{(r)}(GT_L)$ triangulates $GT_L$.
\end{proposition}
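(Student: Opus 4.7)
The plan is to establish that every $T \in GT_L$ admits a unique decomposition $T = \sum_{p \in F} c_p T_p$ with $F$ a face of $\psi^{(r)}(GT_L)$ and all $c_p > 0$. Existence is immediate from the Fundamental Lemma of driving rules: applied to $T$, it yields $T = \sum_{p \in P^{(r)}(T)} T_p$, which after collecting repeats becomes $T = \sum_{p \in \overline{P}^{(r)}(T)} m_p T_p$ with $m_p \geq 1$, and the simplicial hypothesis guarantees that $\overline{P}^{(r)}(T) = \psi^{(r)}(T)$ is a face of $\psi^{(r)}(GT_L)$.

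For uniqueness, suppose $T = \sum_{p \in F} c_p T_p$ is any face-supported decomposition. I would split the argument into two steps: first show that the support $F$ must equal $\overline{P}^{(r)}(T)$, then deduce $c_p = m_p$ from linear independence of the rays $\{T_p : p \in F\}$ spanning the simplicial cone $\sigma_F$. Linear independence within a single face is a general property one would check from the structure of the $T_p$; indeed, each $T_p$ is the characteristic function of a path, and distinct paths in a face differ on distinguished cells that allow the coefficients to be read off.

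To match the supports, I would exploit the sufficient form of the simplicial condition $P^{(r)}(T - T_p) = P^{(r)}(T) \setminus \{p\}$ for $p \in P^{(r)}(T)$, together with induction on the total weight $\sum_p c_p$ (first for rational $T$, then passing to real $T$ by continuity). The inductive peel is straightforward once one picks some $p$ with both $c_p > 0$ and $p \in P^{(r)}(T)$: setting $T' = T - T_p$ yields a face-supported decomposition of smaller weight, which by induction coincides with the driving rule's output on $T'$, namely $\overline{P}^{(r)}(T) \setminus \{p\}$ with coefficient $c_p - 1$ on $p$; reattaching $T_p$ then forces $F = \overline{P}^{(r)}(T)$ with matching coefficients.

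The main obstacle is ensuring that such a $p$ always exists, i.e., that $F \subseteq \overline{P}^{(r)}(T)$ to begin with, for then one can peel off arbitrary elements of $F$. To handle this I would use that $F$, being a face, equals $\overline{P}^{(r)}(T^*)$ for some auxiliary $T^* \in GT_L$, and then interpolate between $T^*$ and $T$ by a sequence of single-path modifications $T^* \to T^* + T_{q_1} \to T^* + T_{q_1} + T_{q_2} \to \cdots$ (and symmetric subtractions), applying the simplicial condition at each step to track how the resolution changes. A continuity argument, based on the observation that the local rule at each node of $E(T)$ is insensitive to perturbations within any open simplicial cone, extends the conclusion from rational to arbitrary real $T \in GT_L$.
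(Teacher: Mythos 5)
Your overall skeleton (existence from Lemma \ref{lem:drtri}; uniqueness by peeling one path at a time using $P^{(r)}(T-T_p)=P^{(r)}(T)-\{p\}$ and induction on total weight) is reasonable, and you correctly identified the crux: to peel you must find some $p$ with $c_p\geq 1$ in the given face-supported decomposition that also lies in $P^{(r)}(T)$, i.e.\ you must relate the arbitrary face $F$ to the face $\overline{P}^{(r)}(T)$ produced by the rule. But your fix for this does not work. The simplicial hypothesis only constrains $P^{(r)}(T-T_p)$ for $p\in P^{(r)}(T)$: it says nothing about $P^{(r)}(T+T_q)$, nor about $T-T_q$ when $q\notin P^{(r)}(T)$ (which need not even lie in $GT_L$). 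So the proposed interpolation $T^*\to T^*+T_{q_1}\to T^*+T_{q_1}+T_{q_2}\to\cdots$ from an auxiliary $T^*$ with $\overline{P}^{(r)}(T^*)=F$ cannot be tracked by the hypothesis at all, and the induction never gets started. What is actually needed is an ``idempotence'' statement: for every multiset $P$ of paths whose support is a face, $P^{(r)}\bigl(\sum_{p\in P}T_p\bigr)=P$. The paper does not prove Proposition \ref{prp:simplicial} in the abstract at all; for the two rules it cares about, exactly this missing ingredient is supplied by the converse halves of Theorems \ref{thm:Michigannc} and \ref{thm:Bostonnc}, which construct an explicit planar edge embedding of $E(\sum_{p\in P}T_p)$ realizing the rule, so that the rule's resolution reproduces $P$ and uniqueness follows.

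A secondary problem is your treatment of linear independence of $\{T_p\}_{p\in F}$. It is not a ``general property'' of distinct paths that coefficients can be read off from distinguished cells: if $p$ and $q$ cross and $p',q'$ are obtained by swapping their tails at a crossing, then $T_p+T_q=T_{p'}+T_{q'}$, a dependence among four distinct characteristic vectors (this identity is used in Section \ref{sec:regular}). Independence has to come from the compatibility structure of a face, either proved directly (for non-crossing or non-kissing collections this is doable) or deduced a posteriori from uniqueness of integer decompositions, which again requires the idempotence statement above. The passage from rational to real $T$ by density of rational points in the finitely many closed cones is fine once the integer case is secure, so the real content you are missing is precisely the converse/idempotence property, not the continuity step.
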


There are two particular driving rules we are going to consider in detail---the \emph{Michigan} driving rule, $mich$, and the \emph{Boston} driving rule, $bos$. Both are simplicial; indeed, we will see that $\psi^{(mich)}(GT_L) = \mnnL$ and $\psi^{(bos)}(GT_L) = \mncL$.

\begin{figure}[h!]
\begin{center}
\input{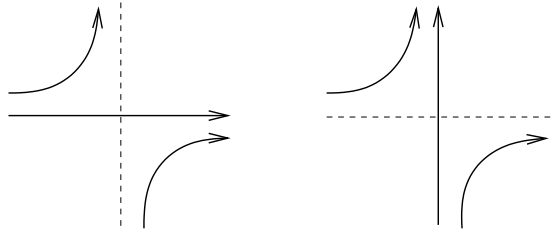}
\end{center}
\caption{Michigan driving rule.}\label{fig:dr7}
\end{figure}

The first rule can be described as follows. Let $a$ be the numbers of edges (or drivers) entering a node from the west; let $d$ be the number from the south. Similarly, let $b$ and $c$ be the number of drivers leaving this node from north and east, respectively. Then we know $a + d = b + c$. 

\begin{definition}[Michigan driving rule] 
The \emph{Michigan driving rule} resolves each node of $E(T)$ according to the following two cases:
\begin{enumerate}
    \item If $a \geq b, d \leq c$, the leftmost $b$ drivers from the west turn left without crossing routes with each other. The remaining $a-b$ drivers go straight, without crossing routes. Finally, all $d$ drivers coming from the south turn right, without crossing routes with each other or with drivers coming from the west.
    \item If $a \leq b, d \geq c$, all $a$ drivers from the west turn left without crossing routes with each other. The leftmost $b-a$ drivers from the south go straight without crossing routes with each other or any west-coming drivers. Finally, the remaining $c$ drivers coming from the south turn right without crossing routes.
\end{enumerate}
\end{definition}

The Michigan driving rule is schematically depicted on Figure \ref{fig:dr7}. The bold arrows denote the flows of drivers. Note that if $a=b, c=d$ then everybody turns and nobody goes straight: west-coming drivers turn north and south-coming drivers turn east. It is easy to remember the Michigan driving rule by recalling that drivers never cross paths.

\begin{figure}[h!]
\begin{center}
\input{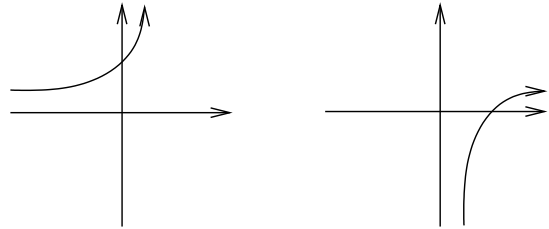}
\end{center}
\caption{Boston driving rule.}\label{fig:dr8}
\end{figure}

\begin{definition}[Boston driving rule]
The \emph{Boston driving rule} resolves each node of $E(T)$ according to the following two cases:
\begin{enumerate}
    \item If $a \geq c, d \leq b$, the leftmost $a-c$ drivers from the west turn left without crossing routes with each other. The remaining $c$ drivers go straight without crossing routes with each other. Finally, all $d$ drivers coming from south go straight, without crossing routes with each other, but crossing the routes of all other drivers. 
    \item If $a \leq c, d \geq b$, all $a$ drivers from the west go straight without crossing routes with each other. The leftmost $b$ drivers from the south go straight, without crossing routes with each other but crossing the routes of all west-coming drivers. Finally, the remaining $d-b$ drivers coming from south turn right, without crossing routes with each other but crossing the routes of all west-coming drivers.
\end{enumerate}
\end{definition}

The Boston driving rule is schematically depicted on Figure \ref{fig:dr8}. Note that if $a=c, b=d$ then everybody goes straight and nobody turns. One can remember the Boston driving rule by remembering that drivers take every opportunity possible to cross the path of perpendicular traffic (but they do turn from the correct lane). 

We can see on the left of Figure \ref{fig:dr5} that drivers follow the Michigan driving rule, while on the right drivers follow the Boston driving rule. This is no accident, and we will see in Sections \ref{sec:Michigan} and \ref{sec:Boston} how the Michigan and Boston rules can be used to study $\mnnL$ and $\mncL$, respectively.

\begin{remark}
Although Theorem \ref{thm:path} characterizes weak separability in terms of paths, we do not have a driving rule to resolve a pattern into pairwise weakly separable paths. This is because not all patterns can be so separated! However, since every collection of weakly separable paths is also a collection of non-kissing paths, it will follow from Corollary~\ref{thm:nctri} that every pattern can be resolved into a collection of weakly separated paths in at most one way. 
\end{remark}

%Before concluding this section, we remark on technical issues related to the empty set. Suppose that the shape $L$ contains the cell $(m,0)$, so the path corresponding to $\emptyset$ lies in $L$. The GT-pattern $T_{\emptyset}$ which is associated to the empty set is identically zero. Thus, if $\emptyset$ is a vertex of ${\bf m}^{(c)}_L$, then ${\bf m}^{(c)}_L$ can never triangulate $GT_L$. We therefore introduce the notations $\mnnL*$ and $\mncL*$ for the complexes $\mnnL$ and $\mncL$ with the vertex $\emptyset$ removed. 

%However, when we switch to the picture in terms of driving rules, we need to include the empty set. When we take a GT-pattern of shape $L$ and and fill the surrounding boundary cells with integers, there is a freedom in how we fill the cells to the south of $L$. This freedom means that $E(T)$ is only well defined up to adding or removing the same number of horizontal edges along each segment of the southern boundary of $L$. For any driving rule, this ambiguity adds several copies of the purely horizontal path, which corresponds to $\emptyset$. 

\section{The Michigan driving rule}\label{sec:Michigan}

In this section, we will use the Michigan driving rule to establish the basis properties of $\mnnL$. These results are classical; in the next section we will establish similar results for $\mncL$, which will be new. The following theorem describes the key property of the Michigan rule. 

\begin{theorem} \label{thm:Michigannc}
If $T \in GT_L$, the paths in $P^{(mich)}(T)$ are pairwise non-crossing. Conversely, for every pairwise non-crossing collection of paths $P$, there exists a planar edge embedding so that the nodes of $E(\sum_{p \in P} T_p)$ are resolved exactly according to the Michigan rule. In other words, the Michigan driving rule is simplicial and \[ \psi^{(mich)}(GT_L) = \mnnL.\]
\end{theorem}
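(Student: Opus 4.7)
My plan is to establish the two assertions of the theorem in turn, from which the equality $\psi^{(mich)}(GT_L) = \mnnL$ together with simpliciality of the Michigan rule follow formally via Theorem~\ref{thm:path}(1).

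For the first assertion, I would observe that the Michigan rule has, by design, the key local property that no two drivers cross paths at any single node: in both subcases of the definition, the west-incoming drivers that turn north stay to the north of those going straight east, who in turn stay north of all south-incoming drivers, with the outgoing edges ordered dually on the north and east sides. Consequently, the planar edge embedding produced by applying the rule at every node of $E(T)$ is free of crossings anywhere in $L$. Given two paths $p, q \in P^{(mich)}(T)$, if $p$ and $q$ were to cross in the geometric sense (i.e.\ neither always lies NW of the other), then by a discrete intermediate-value argument there would exist a node $v$ at which the routes of $p$ and $q$ swap NW/SE order; in any planar embedding this forces a local crossing at $v$, contradicting the Michigan embedding. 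Hence any two paths of $P^{(mich)}(T)$ are non-crossing, and by Theorem~\ref{thm:path}(1) the associated subsets are non-nesting, giving $\psi^{(mich)}(GT_L) \subseteq \mnnL$.

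For the converse, let $P$ be a collection of pairwise non-crossing paths, and set $T = \sum_{p \in P} T_p \in GT_L$. The non-crossing relation totally orders the paths of $P$ from NW to SE; call this order $p_1, p_2, \ldots, p_N$. I then build a planar edge embedding at each node $v$ by placing, among the paths of $P$ passing through $v$ from the west, their incoming edges from top (NW) to bottom in the order induced by the total order on $P$, and similarly for south-incoming edges, with outgoing edges ordered dually. I would then verify node-by-node that this embedding matches the Michigan rule: among the $a$ west-incoming and $b$ north-outgoing drivers, exactly the $\min(a,b)$ northernmost west-incoming drivers turn north while the rest proceed east, and all $d$ south-incoming drivers fill the remaining outgoing slots without crossing anyone---precisely the Michigan prescription in both subcases ($a\ge b$ versus $a\le b$). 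The non-crossing condition on $P$ is exactly what makes this consistent across all of $L$, so $P = P^{(mich)}(T)$ for this planar embedding, yielding $\mnnL \subseteq \psi^{(mich)}(GT_L)$.

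Combining the two directions gives $\psi^{(mich)}(GT_L) = \mnnL$. Since the right-hand side is manifestly a simplicial complex, so is the left, and the Michigan rule is therefore simplicial; equivalently, one can verify the condition $P^{(mich)}(T) - \{p\} = P^{(mich)}(T - T_p)$ directly from the embedding above, since deleting one path from a non-crossing family leaves a non-crossing family with the inherited embedding. The main obstacle I anticipate is the careful node-by-node bookkeeping in the converse direction: one must show that the NW-to-SE ordering of $P$ induces orderings of incoming edges at each shared node whose ``run lengths'' precisely match the Michigan quantities. This amounts to ruling out configurations in which a path $p_i$ entering from the west turns north while a more NW path $p_j$ (with $j < i$) entering from the same west edge goes straight east, which is exactly what the non-crossing hypothesis at $v$ forbids.
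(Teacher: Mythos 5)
Your argument is correct and essentially coincides with the paper's proof: the forward direction is the observation that Michigan routes never cross, and for the converse the paper likewise orders the paths (lexicographically as $\{N,E\}$-words, which on a pairwise non-crossing family agrees with your NW-to-SE order), assigns the parallel incoming and outgoing edges at each node in that order, and connects the $i$th incoming edge to the $i$th outgoing edge, which is exactly the Michigan rule. The node-by-node bookkeeping you flag (a more SE path cannot turn north while a more NW path goes straight, and $W\to E$ cannot coexist with $S\to N$ at a node) is precisely what the paper compresses into ``this obviously obeys the Michigan driving rule.''
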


\begin{proof}
The first part follows from the simple observation that, with the Michigan driving rule, the routes of drivers never intersect. 

For the second part order the paths $p \in P$ lexicographically, considering paths as words on the alphabet $\{N < E\}$. In other words, $p_i < p_j$ if, at the first place where the paths diverge, $p_i$ steps north and $p_j$ steps east. Order coinciding paths arbitrarily. Now we are going to place the paths in $L$ so that the local picture at each node follows the Michigan driving rule.

In the neighborhood of each node we assign the edges of $E(\sum_{p \in P} T_p)$ to the paths in $P$ which are passing through this node in the obvious way. We order the incoming edges first from north to south among the east-going edges, then west to east among the north-going edges. Likewise order the outgoing edges first from west to east among north-going edges, then north to south among east going edges. We now embed the $i$th path passing through this node by connecting the $i$th incoming edge with the $i$th outgoing edge. This obviously obeys the Michigan driving rule.
\end{proof}

Theorem \ref{thm:Michigannc} along with Proposition \ref{prp:simplicial} gives the following corollary.

\begin{corollary}\label{thm:nntri}
The complex $\mnnL$ triangulates $GT_L$.
\end{corollary}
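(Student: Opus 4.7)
The plan is to combine Theorem \ref{thm:Michigannc} with Proposition \ref{prp:simplicial}. Theorem \ref{thm:Michigannc} supplies two ingredients: that the Michigan driving rule is simplicial in the sense of the definition preceding Proposition \ref{prp:simplicial}, and that its image satisfies $\psi^{(mich)}(GT_L) = \mnnL$. Proposition \ref{prp:simplicial} then says that the image of any simplicial driving rule triangulates $GT_L$, so applying it with $r = mich$ and substituting the identification $\psi^{(mich)}(GT_L) = \mnnL$ yields the corollary in a single line.

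If one wants to unpack ``triangulates'' rather than cite Proposition \ref{prp:simplicial} as a black box, one must verify both existence and uniqueness of the decomposition of each $T \in GT_L$. For existence, I would take an arbitrary $T$, apply the fundamental lemma of driving rules (Lemma \ref{lem:drtri}) to write $T = \sum_{p \in P^{(mich)}(T)} T_p$, and invoke the forward half of Theorem \ref{thm:Michigannc} to conclude that the paths appearing are pairwise non-crossing; their underlying set $\overline{P}^{(mich)}(T)$ is then a face of $\mnnL$ on which the decomposition is supported. For uniqueness, I would suppose $T = \sum_{p \in P} a_p T_p$ is any positive decomposition with $P$ a face of $\mnnL$ and $a_p > 0$. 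The converse half of Theorem \ref{thm:Michigannc} says that this pairwise non-crossing collection, counted with multiplicities $a_p$, can be realized by resolving $E(T)$ via the Michigan rule for some planar edge embedding, and therefore must coincide with $P^{(mich)}(T)$; this determines both $P$ and the multiplicities $a_p$.

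The only subtle point is that uniqueness genuinely requires the converse half of Theorem \ref{thm:Michigannc}: the forward half alone gives existence of some non-crossing decomposition, but leaves open the possibility that a second, inequivalent non-crossing decomposition could be produced by an entirely different resolution scheme. Since all the real work has already been packaged into Theorem \ref{thm:Michigannc} and Proposition \ref{prp:simplicial}, I do not anticipate any additional obstacle beyond this bookkeeping.
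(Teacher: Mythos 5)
Your proposal is correct and matches the paper exactly: the paper derives this corollary in one line by combining Theorem \ref{thm:Michigannc} (the Michigan rule is simplicial with image $\mnnL$) with Proposition \ref{prp:simplicial}. Your additional unpacking of existence and uniqueness is a faithful elaboration of what those two results encode, not a different argument.
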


%\begin{proof}
%From Theorem \ref{thm:Michigannc} and Theorem \ref{thm:path} it follows that $\mnnL$ is exactly the complex of all $I$ such that the corresponding $p(I)$ form $\overline{P}^{(mich)}(T)$ for some $T$. This and Lemma \ref{lem:drtri} implies that $\mnnL$ triangulates $GT_L$. Note that uniqueness follows from Theorem \ref{thm:Michigannc}, since it shows that every family of noncrossing paths $P$ can be obtained by the Michigan driving rule, which leaves no ambiguity.
%\end{proof}

% We now describe a number of immediate corollaries of  Theorem~\ref{thm:nntri}. Let us say that a node of a shape $L$ is \emph{critical} if it is the southwest corner of a cell (including the external cells). Order the critical nodes of $L$, first by distance from the source, then diagonally from due north to due east along lines of slope $-1$, so that \[(0,0) < (0,1) < (1,0) < (0,2) < (1,1) < (2,0) < \cdots.\]
% Refer to this ordering as $O(L)$, and let $N(L)$ denote the number of critical nodes of $L$. It is easy to check that $N(L)$ is also the number of cells of $L$.
% 
% 
% \begin{corollary} \label{cor:purenn}
% The complex $\mnnL$ is homeomorphic to a ball of dimension $N(L)$ .  In particular, $\mnnL$ is pure of dimension $N(L)$, and is Cohen-Macaulay.
% \end{corollary}

Recall that a simplicial complex is called a pseudo-manifold if it is pure of dimension $d$ (for some $d$) and every face of dimension $d-1$ is contained in $2$ faces of dimension $d$; it is called a pseudo-manifold with boundary if every face of dimension $d-1$ is contained in either $1$ or $2$ faces of dimension $d$.

\begin{corollary} \label{cor:purenn}
The simplicial complex $\mnnL$ is homeomorphic to a ball of dimension $N(L)$.  It is pure of dimension $N(L)$, a pseudo-manifold with boundary, and Cohen-Macaulay.
\end{corollary}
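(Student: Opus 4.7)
The plan is to deduce all four assertions from Corollary \ref{thm:nntri} combined with Proposition \ref{pro:trigeom} and standard facts about triangulations of polytopes; essentially no new work is required beyond what has already been done. First, Corollary \ref{thm:nntri} asserts that $\mnnL$ triangulates $GT_L$; by the discussion preceding Proposition \ref{pro:trigeom}, this means the geometric realization of $\mnnL$ is a genuine simplicial subdivision of the $N(L)$-dimensional polytope $GT^0_L$. The homeomorphism-to-a-ball claim is then exactly the conclusion of Proposition \ref{pro:trigeom}.

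Next I would treat purity and the pseudo-manifold-with-boundary property as formal consequences of having a simplicial subdivision of an $N(L)$-dimensional polytope. For purity, each maximal face must itself have dimension $N(L)$, since the relative interiors of the maximal simplices partition an open dense subset of $GT^0_L$ and a face of lower dimension cannot cover any open set. For the pseudo-manifold property, every $(N(L)-1)$-dimensional face $\tau$ has its relative interior lying either on the boundary of $GT^0_L$, in which case $\tau$ is incident to exactly one facet, or in the interior of $GT^0_L$, in which case $\tau$ separates precisely two facets.

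For Cohen-Macaulayness I would invoke Remark \ref{rem:RW}, which identifies $\mnnL$ with the canonical triangulation of the order polytope of the poset $P_L$ of critical nodes of $L$. It is classical that as an abstract simplicial complex this canonical triangulation is the order complex of the distributive lattice $J(P_L)$ of order ideals of $P_L$, whose maximal chains are in bijection with linear extensions of $P_L$. The order complex of any finite distributive lattice is shellable (any linear extension of $J(P_L)$ furnishes a shelling in lexicographic fashion), and shellable pure simplicial complexes are Cohen-Macaulay, which finishes the proof.

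The substance of the corollary is contained in Corollary \ref{thm:nntri}; the remaining steps are invocations of general principles. The only place where some care is needed is the Cohen-Macaulay conclusion, and I would sidestep any direct verification of Reisner's criterion by routing through the known shellability of the canonical triangulation of an order polytope, as indicated above.
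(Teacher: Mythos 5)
Your proposal is correct, and for the ball statement it coincides with the paper: both deduce the homeomorphism from Corollary \ref{thm:nntri} together with Proposition \ref{pro:trigeom}. The differences are in how the remaining properties are obtained. The paper treats purity and the pseudo-manifold property as formal consequences of the ball statement itself (a complex homeomorphic to an $N(L)$-manifold with boundary is automatically pure of dimension $N(L)$ and a pseudo-manifold with boundary), whereas you re-derive them from the geometry of the subdivision of $GT^0_L$ (dense interiors for purity, boundary versus interior codimension-one faces for the pseudo-manifold count); both arguments are fine, yours being slightly more hands-on and not needing the topological input. The genuine divergence is Cohen-Macaulayness: the paper simply cites Munkres' theorem that a complex triangulating a ball is Cohen-Macaulay, which is self-contained once the ball statement is in hand, while you route through Remark \ref{rem:RW}, identifying $\mnnL$ with the canonical (staircase) triangulation of the order polytope of $P_L$, i.e.\ the order complex of the distributive lattice $J(P_L)$, and invoke its classical shellability. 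This buys a stronger conclusion (shellability of $\mnnL$, which the paper never states for the non-nesting complex, only for $\mncL$ via regularity), but at the cost of leaning on Remark \ref{rem:RW}, which the paper asserts without proof, so strictly speaking your argument has an extra unverified identification that the paper's citation of Munkres avoids. One small imprecision: the shelling of the order complex of $J(P_L)$ is not literally given by ``any linear extension of $J(P_L)$''; the standard justification is Bj\"orner's EL-shellability of (distributive, or more generally supersolvable) lattices, with maximal chains ordered lexicographically by their edge labels. With that reference corrected, your proof is complete.
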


\begin{proof}
That $\mnnL$ is a ball of dimension $N(L)$ follows immediately from Theorem~\ref{thm:nntri} and Proposition~\ref{pro:trigeom}. In particular, $\mnnL$ is pure of dimension $N(L)$. A simplicial complex that is a manifold with boundary is, in particular, a pseudo-manifold with boundary. Since $\mnnL$ triangulates a ball, it is Cohen-Macaulay by~\cite[Theorem~2.2]{Munkres}. 
\end{proof}

\section{The Boston driving rule}\label{sec:Boston}

The following theorem describes the key property of the Boston rule.

\begin{theorem} \label{thm:Bostonnc}
If $T \in GT_L$, the paths in $P^{(bos)}(T)$ are pairwise non-kissing. Conversely, for every pairwise non-kissing collection of paths $P$, there exists a planar edge embedding so that the nodes of $E(\sum_{p \in P} T_p)$ are resolved exactly according to the Boston rule. In other words, the Boston rule is simplicial and \[ \psi^{(bos)}(GT_L) = \mncL.\]
\end{theorem}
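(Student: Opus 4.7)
The theorem mirrors Theorem~\ref{thm:Michigannc}, with non-kissing in place of non-crossing and the Boston rule in place of the Michigan rule. I would organize the proof into two parts accordingly, plus a final embedding construction.

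For the forward direction, suppose paths $p$ and $q$ in $P^{(bos)}(T)$ share a proper common part $\gamma$. By maximality of $\gamma$, they must arrive at its first node from the two different directions (one from the west, one from the south) and leave together along $\gamma$'s first edge; dually, they enter $\gamma$'s last node together via its last edge and exit via two distinct edges. Under the applicable Boston case at each of these two boundary nodes, one of $p,q$ is assigned to a ``straight-through'' track and the other to a ``turning'' track. Within $\gamma$, these two tracks persist, since at each intermediate node of $\gamma$ the Boston rule routes same-direction through-traffic without swapping. A direct case analysis, keyed on whether the first edge of $\gamma$ heads north or east and likewise the last edge, shows that the boundary assignments are linked in precisely the way that sends the west-entering path out east-going and the south-entering path out north-going. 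This is the non-kissing condition.

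For the converse, let $P$ be a pairwise non-kissing collection of paths. I first verify that at every node the flow configuration of $P$ fits into one of the two Boston cases, which amounts to showing that at no node do we have both a path entering from the west and exiting north \emph{and} a path entering from the south and exiting east. If both existed, they would share that node as a single-node common part, and each would leave in a direction different from the one it entered---a kiss, contradicting the hypothesis. This dichotomy places every node into exactly one of the two Boston cases.

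The main technical obstacle is to construct a planar edge embedding of $E(\sum_{p \in P} T_p)$ that realizes the Boston assignment at every node. I would proceed in analogy with Theorem~\ref{thm:Michigannc}: fix a global total order on $P$ (for instance, lex on step-sequences treating $E<N$, reversing the Michigan convention) and, at each node, order the incoming and outgoing edges along each side so that the local Boston assignment matches the global order---turning tracks occupying the ``outer'' positions (e.g.\ topmost west-incoming in case~1) and straight tracks occupying the interior and crossing. The delicate step is consistency: the ordering of paths on a given edge must agree from the viewpoints of its two endpoints. This relies on the fact that, in a non-kissing collection, the relative order of two paths on a shared edge reverses exactly at their common parts through which they cross, which is precisely the order-reversal produced by Boston resolutions at the boundary of the common part. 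Once the embedding is fixed, the fundamental lemma of driving rules (Lemma~\ref{lem:drtri}) and Proposition~\ref{prp:simplicial} immediately yield simpliciality of the Boston rule and the equality $\psi^{(bos)}(GT_L) = \mncL$.
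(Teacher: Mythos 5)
Your forward direction is essentially the paper's: one examines the two cases of the Boston rule to see that two paths arriving at a node from different directions must cross if they separate there, and that a stream merging into another crosses it on entry and cannot cross on exit, so every proper common part is a crossing; your lane-persistence remark and your observation that no node can carry both an $EN$ and an $NE$ path (that would be a single-node kiss) are exactly the paper's first steps.

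The gap is in the converse, at precisely the step you label ``delicate,'' and it is the heart of the theorem. A single global total order on $P$ (lex with $E<N$, or any other) cannot induce the required planar edge embedding: in a non-kissing family two paths may cross more than once --- e.g.\ the paths of $\{1,4,5\}$ and $\{2,3,6\}$ when $L$ is the $3\times 3$ rectangle --- so their relative position on the copies of a shared edge must be opposite on different common parts. You note this reversal yourself, but then the prescription ``order the edges at each node so that the Boston assignment matches the global order'' is not well defined, and the assertion that the orderings seen from the two endpoints of an edge agree is exactly what must be proved, not a fact to ``rely on.'' The paper sidesteps any global order: for each edge $e$ it orders the paths through $e$ by the common refinement of two \emph{local} lexicographic orders, one on the $\{N<E\}$-word of each path after $e$ and one on the $\{S<W\}$-word of its reversed history before $e$; the non-kissing hypothesis is used to show these two orders never conflict (and that paths tied in both are identical), so the refinement exists, and the copies of $e$ are assigned in this order (north-to-south for eastward $e$, west-to-east for northward $e$). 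The Boston rule at a node $v$ then follows because the final order refines the ``future'' order (so among west-entrants the ones turning north lie north of those going straight) and refines the ``past'' order (so among north-exits those coming from the west lie east of those from the south); since each edge's order is defined intrinsically from that edge, no two-endpoint consistency issue ever arises. To complete your argument you would either have to adopt this per-edge construction or prove the consistency claim you assert, which amounts to the same work.
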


\begin{proof}
The first part follows by examination of the Boston driving rule. Two paths entering a node from different directions cannot leave the node in different directions without crossing. If one path turns to follow the other for some distance before parting ways, then they must cross upon meeting and not cross when parting. In particular, they will not kiss. 

Now, let $P$ be any collection of pairwise non-kissing paths. Set $\mathcal{E}=E(\sum_{p \in P} T_p)$. Let $e$ be any edge of $L$ and let $P(e)$ be the collection of paths passing through $e$. For each path $p$ in $P(e)$, let $w_{e \to}(p,e)$ be the word on the alphabet $\{ N < E \}$, describing the path $p$ takes after $e$ and let $w_{\from e}$ be the word in the alphabet $\{ S < W \}$ describing the reverse of the path $p$ takes before $e$. Order $P(e)$ lexicographically by $w_{e \to}(p,e)$; we call this the \emph{first ordering}. Note that two different paths in $P(e)$ may have the same $w_{e \to}(\cdot,e)$ and thus be tied in the first ordering. Also, order $P(e)$ lexicographically by $w_{\from e}(p,e)$; we call this the second ordering. By the non-kissing hypothesis, these two orderings will be consistent, but some paths that are tied in one ordering will not be tied in the other. Any paths which in tied in both the first and second ordering are identical. Define the \emph{final ordering} of $P(e)$ to be the common refinement of the first ordering and the second.

If $e$ points eastward, order the copies of $e$ in $\mathcal{E}$ from north to south and assign them to the paths in $P(e)$ in the order of the final ordering. If $e$ points northward, order the 
copies of $e$ in $\mathcal{E}$ from west to east and, again, assign them to the paths in $P(e)$ in the order of the final ordering. We claim that this way of drawing the paths obeys the Boston driving 
rule.

Fix an internal node $v$. Note that, by the non-kissing hypothesis, it is impossible that there is one path that enters $v$ heading east and leaves to the north and another path that enters $v$ heading north and leaves to the east. Without loss of generality, assume that the latter case does not occur. Let $P_{NN}$, $P_{EN}$ and $P_{EE}$ be the set of paths that enter and then leave $v$ in the directions $NN$, $EN$ and $EE$ respectively. Within each of these three groups, our algorithm places the paths in the same order when they enter $v$ and when they leave it. Since the final ordering refines the first ordering, along the edges entering $v$ from the west, those that will turn to the north are placed to the north of those that will continue straight to the east. Similarly, since the final ordering refines the second ordering, among the edges leaving $v$ towards the north, those that came from the west will lie to the east of those that came from the south. In short, the paths obey the Boston driving rule at $v$.

% For the second part order the paths $p \in P$ as follows. If $p$ is a path, write it as a word $w(p) = w_1 \cdots w_m$ on the alphabet $\{N < E\}$, then write this word backwards: $\overleftarrow{w}(p) = w_m \cdots w_1$. Order the paths lexicographically by $\overleftarrow{w}$. In other words, $p_i < p_j$ if $p_i$ terminates at a sink northwest of $p_j$, or, if they terminate at the same node, $p_i$ was southeast of $p_j$ on the step before the paths converged for the last time. Order coinciding paths arbitrarily. Now we are going to place the paths in $L$ so that the local picture at each node follows the Boston driving rule.
% 
% In the neighborhood of each node we assign the edges of $E(\sum_{p \in P} T_p)$ to the paths in $P$ which are passing through this node according to the following rule. We order the incoming edges first from west to east among the north-going edges, then north to south among the east-going edges. Order the outgoing edges first from west to east among north-going edges, then north to south among east-going edges. We now embed the $i$th path passing through this node by connecting the $i$th incoming edge with the $i$th outgoing edge. This obviously obeys the Boston driving rule.
\end{proof}

As with the Michigan rule, Theorem \ref{thm:Bostonnc} along with Proposition \ref{prp:simplicial} gives the following.

\begin{corollary} \label{thm:nctri}
The complex $\mncL$ triangulates $GT_L$.
\end{corollary}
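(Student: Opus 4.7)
The plan is to deduce this as a direct consequence of Theorem \ref{thm:Bostonnc} together with Proposition \ref{prp:simplicial}, in exact parallel with the derivation of Corollary \ref{thm:nntri} from Theorem \ref{thm:Michigannc}. First I would invoke Theorem \ref{thm:Bostonnc}, which asserts that the Boston driving rule $bos$ is simplicial and that $\psi^{(bos)}(GT_L) = \mncL$. Proposition \ref{prp:simplicial} then applies verbatim: any simplicial driving rule yields a complex that triangulates $GT_L$, so $\mncL$ triangulates $GT_L$.

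To spell out what this unpacks to, recall that ``triangulates'' demands both existence and uniqueness of a positive decomposition of each $T \in GT_L$ into sums of $T_p$ with supporting paths forming a face of the complex. Existence comes from the Fundamental Lemma of driving rules (Lemma \ref{lem:drtri}) applied to the Boston rule: one obtains $T = \sum_{p \in P^{(bos)}(T)} T_p$ with the supporting paths pairwise non-kissing by the first half of Theorem \ref{thm:Bostonnc}, and thus forming a face of $\mncL$. For uniqueness, given any positive decomposition of $T$ supported on a non-kissing family $P$, the second half of Theorem \ref{thm:Bostonnc} provides a planar edge embedding of $P$ consistent with the Boston rule; since the Boston resolution at each node is determined by the local flow data coming from $T$ alone, the decomposition must coincide with the Boston output $P^{(bos)}(T)$.

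The main obstacle has already been absorbed into Theorem \ref{thm:Bostonnc}, whose proof establishes that the Boston rule produces exactly pairwise non-kissing collections and conversely accommodates every such collection via a planar embedding. Once that equivalence and the simpliciality of $bos$ are in place, the corollary is essentially formal, and no further combinatorial argument is required here.
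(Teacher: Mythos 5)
Your proposal is correct and follows exactly the paper's route: the paper likewise deduces Corollary \ref{thm:nctri} by combining Theorem \ref{thm:Bostonnc} (the Boston rule is simplicial with $\psi^{(bos)}(GT_L) = \mncL$) with Proposition \ref{prp:simplicial}, just as Corollary \ref{thm:nntri} follows from the Michigan analogue. Your additional unpacking of existence via Lemma \ref{lem:drtri} and uniqueness via the converse half of Theorem \ref{thm:Bostonnc} is a faithful elaboration of the same argument, not a different one.
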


%\begin{proof}
%The argument is essentially verbatim the proof of Theorem \ref{thm:nntri}, with Theorem \ref{thm:Bostonnc} used instead of Theorem \ref{thm:Michigannc}.
%\end{proof}

As before, we obtain another corollary.

\begin{corollary} \label{cor:purenc}
The simplicial complex $\mncL$ is homeomorphic to a ball of dimension $N(L)$.  It is pure of dimension $N(L)$, a pseudo-manifold with boundary, and Cohen-Macaulay.
\end{corollary}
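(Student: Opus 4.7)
The plan is to mirror the proof of Corollary~\ref{cor:purenn} almost verbatim, since all the hard work has already been done in establishing Theorem~\ref{thm:Bostonnc} and Corollary~\ref{thm:nctri}. The four conclusions---ball, pure, pseudo-manifold with boundary, Cohen-Macaulay---all follow from the single fact that $\mncL$ triangulates the cone $GT_L$.

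First I would invoke Corollary~\ref{thm:nctri} together with Proposition~\ref{pro:trigeom}: since $\mncL$ triangulates $GT_L$, and $GT_L^0$ is a slice of $GT_L$ of dimension $N(L)$, the complex $\mncL$ is homeomorphic to a closed ball of dimension $N(L)$. Purity of dimension $N(L)$ is then automatic, because a ball is in particular a manifold with boundary, and every triangulation of a manifold with boundary is pure of the top dimension.

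Next, for the pseudo-manifold-with-boundary claim, I would observe that in any triangulated $d$-manifold with boundary, a $(d-1)$-face lies in exactly two facets if it is interior and in exactly one facet if it lies on the boundary; this is precisely the definition of pseudo-manifold with boundary. So this follows purely from the topological manifold-with-boundary structure inherited from the ball.

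Finally, for Cohen-Macaulayness, I would appeal to \cite[Theorem~2.2]{Munkres}, which says that any simplicial complex triangulating a topological ball (indeed, any complex whose geometric realization is a CM topological space) is Cohen-Macaulay over any field; this applies since $\mncL$ triangulates an $N(L)$-ball. I do not see any genuine obstacle here: the entire content is packaged in Theorem~\ref{thm:Bostonnc}, which supplies the decomposition map $\psi^{(bos)}$ and identifies its image with $\mncL$, and in Proposition~\ref{prp:simplicial}, which promotes this to a triangulation. Once the triangulation statement (Corollary~\ref{thm:nctri}) is in hand, the remaining assertions are formal topological consequences, exactly as in the Michigan case.
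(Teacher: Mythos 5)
Your proposal is correct and follows essentially the same route as the paper: the paper derives Corollary~\ref{cor:purenc} exactly as it does Corollary~\ref{cor:purenn}, namely by combining Corollary~\ref{thm:nctri} with Proposition~\ref{pro:trigeom} to get the ball of dimension $N(L)$, deducing purity and the pseudo-manifold-with-boundary property from the manifold structure, and citing \cite[Theorem~2.2]{Munkres} for Cohen-Macaulayness.
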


It is worthwhile to give a second proof that $\mncL$ has dimension $N(L)$. This argument can also be adapted to the case of $\mnnL$.

\begin{proof}[Second proof that $\mncL$ has dimension $N(L)$ (sketch)]
 In what follows it is convenient to look at the number of ``traffic flows" or ``streams" we have without distinguishing how many drivers take a certain path. Recall that we denote the set of distinct 
flows by $\overline{P}$. For a collection of paths $P$
supported on a face $F= F(P)$ of $\mncL$, the dimension of $F$ is $|\overline P|-1$.

Recall that a critical node is a node which has edges leaving it heading to the north and to the east, and that $N(L)$ is the number of critical nodes. At each critical node, the number of traffic streams increases by at most $1$. This happens exactly when one of the entering streams splits, \emph{i.e.}, when part of it turns and part does not. It is also possible that no stream splits, in which case the number of streams is preserved. Thus we can have at most $N(L)+1$ distinct streams in $P$: two streams originating from the source and one extra stream for every other critical node. So the face $F(P)$ has dimension at most $N(L)$.
\end{proof}

This proof is useful because, if we are given a collection $P$ of non-kissing paths with cardinality less than $N(L)+1$, this proof tells us how to search for paths we can add to $P$ to enhance it to a maximal collection: we need to introduce branching at nodes that don't currently have any. This is particularly nice because, at any node, there are always at most two options for which path to make branch. (Although there are many options for what route the new path should take after the branch.) In particular, this approach has proved useful when checking cases of Conjecture~\ref{conj:lzk} by hand.

One can now easily verify the upper bound of \cite{LZ} on the size of families of pairwise weakly-separated subsets. Let $2^{\mathbf{m}}_{k\leq l}$ denote the set of all subsets $I$ of $[m]$ such that $k\leq |I|\leq l$.

\begin{corollary}\cite[Theorem 1.3]{LZ}
The maximal size of a family of pairwise weakly separated subsets of $2^{\mathbf{m}}_{k\leq l}$ is \[\binom{m+1}{2} - \binom{m+1-l}{2}-\binom{k+1}{2}+1.\]
\end{corollary}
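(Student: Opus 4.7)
The plan is to read this off directly from the fact that weak separation strengthens the non-crossing relation together with the triangulation statement Corollary~\ref{thm:nctri}. Fix the partial staircase $L$ whose sinks are $(m-l,l),(m-l+1,l-1),\ldots,(m-k,k)$, so that (by the identification ${\bf m}^{(c)}_L = {\bf m}^{(c)}_{k\leq l}$ from Section~\ref{sec:GT}) the vertex set of ${\bf m}^{(nc)}_L$ is exactly $2^{\mathbf{m}}_{k\leq l}$.

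A pairwise weakly separated family in $2^{\mathbf{m}}_{k\leq l}$ is, by Lemma~\ref{lem:ncws}, a face of ${\bf m}^{(nc)}_L$. By Corollary~\ref{thm:nctri} and Proposition~\ref{pro:trigeom}, this complex is a triangulated ball of dimension $N(L)$, so every face has at most $N(L)+1$ vertices. It therefore only remains to identify $N(L)+1$ with the displayed binomial expression.

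For this last step I would directly count critical nodes. The shape $L$ consists of those nodes $(a,b)$ satisfying $b\leq l$ and $a\leq m-\max(k,b)$; a node is critical iff both $(a+1,b)$ and $(a,b+1)$ lie in $L$. The critical nodes split naturally into a ``rectangular'' band (with $0\leq b\leq k$ and $0\leq a\leq m-k-1$) of size $(k+1)(m-k)$, and a ``triangular'' band (with $k+1\leq b\leq l-1$ and $0\leq a\leq m-b-1$) of size $\sum_{b=k+1}^{l-1}(m-b) = \binom{m-k}{2} - \binom{m-l+1}{2}$. Adding these and simplifying via the identity
\[
(k+1)(m-k) + \binom{m-k}{2} = \tfrac{(m-k)(m+k+1)}{2} = \binom{m+1}{2} - \binom{k+1}{2}
\]
yields $N(L) = \binom{m+1}{2} - \binom{k+1}{2} - \binom{m+1-l}{2}$, and adding $1$ gives the stated bound. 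The only nontrivial piece of the argument is this combinatorial bookkeeping, which is short and purely algebraic; all the geometric content has already been absorbed into Corollary~\ref{thm:nctri}.
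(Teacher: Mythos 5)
Your argument is correct and is essentially the paper's own proof: choose $L$ with sinks $(m-l,l),\dots,(m-k,k)$, use Lemma~\ref{lem:ncws} to pass from weakly separated families to faces of $\mncL$, bound the size of any face by $N(L)+1$ via the non-crossing triangulation (Corollary~\ref{thm:nctri}, Proposition~\ref{pro:trigeom}, Corollary~\ref{cor:purenc}), and identify $N(L)$ with the number of critical nodes. The only difference is that you actually carry out the critical-node count, which the paper leaves as a check; just note that your band decomposition tacitly assumes $k<l$ (when $k=l$ the row $b=k$ contributes no critical nodes and the triangular sum is empty, the two discrepancies cancelling), though the final formula $N(L)=\binom{m+1}{2}-\binom{m+1-l}{2}-\binom{k+1}{2}$ remains valid in that degenerate case.
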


\begin{proof}
Indeed, choose $L$ to be the partial staircase shape with sinks $(m-l, l), (m-l+1,l-1), \ldots, (m-k, k)$. By Lemma \ref{lem:ncws} any weakly separated family is also non-crossing, and by Theorem \ref{cor:purenc} the size of such a family is at most $N(L)+1$. It remains to check that the number of critical nodes in $L$ is exactly $\binom{m+1}{2} - \binom{m+1-l}{2} - \binom{k+1}{2}$.
\end{proof}

Note that in this light the conjectural purity of $\mwsL$ is particularly surprising. It claims that despite the fact that weak separation is a stronger condition than non-crossing, the sizes of the facets of the corresponding complexes coincide.

\section{Regularity and its Consequences}\label{sec:regular}

In this section, we will show that $\mncL$ is a \emph{regular} triangulation of $GT^0_L$, and discuss the consequences of that result. 
These include showing that $\mncL$ is shellable and that our basis of the Pl\"ucker algebra is a basis of standard monomials, in the sense of Gr\"obner theory. 

Let $A$ be a finite subset of $\RR^N$ and let $w$ be a function from $A$ to $\RR$. Let $\Hull(A)$ be the convex hull of $A$. Then we can use $w$ to build a polyhedral subdivision of $\Hull(A)$, known as the \emph{regular subdivision induced by $w$}. The definition is as follows: Let the set $A'$ in $\RR^{N+1}$ be the set of points $\{ (a, w(a)) \}_{a \in A}$. Let $\Hull(A')$ be the convex hull of $A'$. The regular subdivision of $A$ induced by $w$ is the projection down to $\Hull(A)$ of the lower faces of $\Hull(A')$. See \cite[Chapter~5]{Zieg} for background on regular subdivisions and \cite[Chapter~8]{Sturm} for the connection between regular subdivisions and Gr\"obner theory.

The following alternative description of regular \emph{triangulations} will be more useful in our setting: Let $A$ be a finite subset of $\RR^N$, let $\Delta$ be a triangulation of $\Hull(A)$ and let $w$ be a real valued function on $A$. Assume further that $A$ is contained in an affine hyperplane which does not contain the origin. Then $\Delta$ is the regular subdivision induced by $w$ if and only if the following condition holds: If $\Hull(p_1, p_2, \cdots, p_r)$ is any face of $\Delta$ and $\sum_{i=1}^r c_i p_i=\sum_{a \in A} d_a a$ is any true relation in which all the coefficients $c_i$ and $d_a$ are positive, then $\sum c_i w(p_i) \leq \sum d_a w(a)$, with equality if and only if the two sums $\sum_{i=1}^r c_i p_i$ and $\sum_{a \in A} d_a a$ contain the same terms with the same coefficients. Also, if all of the points in $A$ are lattice points, we can restrict to the case where the $c_i$ and $d_a$ are integers.

We will now exhibit a specific weight function $w$ on the set $\{ T_p \}$, where $p$ ranges over paths in $L$, such that $\mncL$ is the regular subdivision of $GT^0_L$ induced by $w$. For this purpose, it is most convenient to describe paths by their words in $\{ N, E \}$. Let $p$ be a path contained in $L$ and let $u_1 u_2 \cdots u_m$ be the corresponding word in the alphabet $\{ N, E \}$. For $1 \leq i < j \leq m$, define $\delta_{ij}(p)$ to be $0$ if $u_i=u_j$ and $1$ otherwise. We define
$$w(T_p)=\sum_{1 \leq i < j \leq m} \delta_{ij}(p) \epsilon^{j-i}$$
where $\epsilon$ is a very small positive real number. Explicitly, we can take $\epsilon=1/\binom{m}{2}$. We want to use the criterion of the previous paragraph, so we need to know that there is an affine hyperplane which contains all of the $T_p$ but does not contain the origin; that hyperplane is given by the equation $T^{1,-1}=1$.

\begin{theorem}
The regular subdivision of $GT^0_L$ induced by $w$ is $\mncL$.
\end{theorem}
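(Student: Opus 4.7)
My strategy is to apply the alternative characterization of regular triangulations stated just before the theorem: for each face $F = \{p_1, \ldots, p_r\}$ of $\mncL$ and each positive integer relation $\sum c_i T_{p_i} = \sum d_a T_{q_a}$, we must have $\sum c_i w(T_{p_i}) \leq \sum d_a w(T_{q_a})$, with equality iff the two sides coincide as multisets of paths with multiplicities. Since $\mncL$ triangulates $GT_L$ by Corollary~\ref{thm:nctri}, the left-hand side is the \emph{unique} pairwise non-kissing decomposition of $T := \sum c_i T_{p_i}$, so the problem reduces to showing that this non-kissing decomposition strictly minimizes $w$ among all decompositions of $T$.

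The core ingredient is a local \emph{swap lemma}: suppose $p, q$ share a proper common part $C$ occupying word positions $a+1, \ldots, b$ at which they kiss (both paths change direction through $C$). Let $\alpha_p, \beta_p$ denote the prefix and suffix of $p$ before and after $C$ (and similarly for $q$), and let $p' := \alpha_p \cdot C \cdot \beta_q$ and $q' := \alpha_q \cdot C \cdot \beta_p$ be obtained by exchanging the suffixes. The kissing hypothesis says $\alpha_{p,a} \neq \beta_{p,1}$ and $\alpha_{q,a} \neq \beta_{q,1}$; combined with maximality of $C$ ($\alpha_{p,a} \neq \alpha_{q,a}$ and $\beta_{p,1} \neq \beta_{q,1}$) this forces $\alpha_{p,a} = \beta_{q,1}$ and $\alpha_{q,a} = \beta_{p,1}$. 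In particular $(p', q')$ is non-kissing and $T_{p'} + T_{q'} = T_p + T_q$; the claim is that $w(T_p) + w(T_q) > w(T_{p'}) + w(T_{q'})$.

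To prove the weight comparison, I would expand
\[
w(T_p) + w(T_q) - w(T_{p'}) - w(T_{q'}) = \sum_{i<j} \bigl(\delta_{ij}(p) + \delta_{ij}(q) - \delta_{ij}(p') - \delta_{ij}(q')\bigr) \epsilon^{j-i}.
\]
A direct check shows all terms cancel except those with $(i, j) \in [1, a] \times [b+1, m]$. For such $(i, j)$, writing $A = \alpha_{p,i}$, $B = \alpha_{q,i}$, $X = \beta_{p,j-b}$, $Y = \beta_{q,j-b}$, the contribution $\delta(A, X) + \delta(B, Y) - \delta(A, Y) - \delta(B, X)$ evaluates to $0$ (if $A = B$ or $X = Y$), $+2$ (if $A = Y$ and $B = X$), or $-2$ (if $A = X$ and $B = Y$). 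The kissing/maximality conclusions above pin the contribution at $(i, j) = (a, b+1)$ to be exactly $+2\epsilon^{b+1-a}$, and $(a, b+1)$ is the unique pair in the rectangle minimizing $j - i$. Since every other nonzero contribution is $\pm 2 \epsilon^{j-i}$ with strictly higher $\epsilon$-degree, a straightforward geometric-series bound (using that there are at most $\binom{m}{2}$ other position pairs) shows that the choice $\epsilon = 1/\binom{m}{2}$ makes the leading $+2\epsilon^{b+1-a}$ dominate, giving strict positivity.

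Granted the swap lemma, the main theorem follows by descent. For any decomposition $\sum d_a T_{q_a} = T$: if $\{q_a\}$ is pairwise non-kissing then uniqueness of the triangulation forces multiset equality with the face decomposition and weights coincide trivially; otherwise, some pair kisses at a proper common part, and one swap strictly decreases the weight while preserving $T$ and integrality of coefficients. Since the integer decompositions of a fixed $T$ form a finite set, this descent must terminate at the unique non-kissing decomposition, so every other decomposition has strictly larger weight. The main obstacle is the swap lemma: the sign bookkeeping in the difference sum and the explicit $\epsilon$-bound require care, but once the leading-term structure is identified the rest of the argument is routine.
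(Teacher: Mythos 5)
Your proposal is correct and takes essentially the same approach as the paper: the paper likewise applies the stated regularity criterion and resolves a kissing pair by swapping tails past the common part, which preserves $T_p+T_q$ while strictly decreasing the weight because the unique leading term $2\epsilon^{b+1-a}$ dominates the at most $\binom{m}{2}-1$ higher-order corrections, so the weight-minimizing (equivalently, descent-terminal) decomposition is the unique non-kissing one given by the Boston rule. The only slip is the parenthetical claim that the swapped pair $(p',q')$ is non-kissing---it merely crosses rather than kisses at that particular common part and may still kiss elsewhere---but this is harmless, since your descent uses only the strict weight decrease and the finiteness of integral decompositions of $T$.
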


\begin{proof}
Let $Q$ be any collection of paths in $L$, set $S=\sum_{q \in Q} T_q$ and let $P=P^{(bos)}(S)$. We want to show that $\sum_{p \in P} w(T_p) \leq \sum_{q \in Q} w(T_q)$, with equality only if $P=Q$. 

There are only finitely many collections of paths $R$ with $\sum_{r \in R} T_r=S$. Let $R$ be a collection which, among all such collections, minimizes $\sum_{r \in R} w(T_r)$. We will show that $R$ must be non-kissing, and hence must equal $P$. This proves the desired result, because $\sum_{r \in R} w(T_r) \leq \sum_{q \in Q} w(T_q)$ by construction, and the equality condition follows because we will show that $P$ is the unique choice of $R$ minimizing this expression.

So, assume for the sake of contradiction that $R$ is not non-kissing. Let $p$ and $q$ be two paths in $R$ which come together after their first $a$ steps and then depart without crossing at step $b$. Let $u_1 u_2 \cdots u_m$ be the $\{ N,E \}$-word for $p$ and let $v_1 v_2 \cdots v_m$ be the $\{ N,E \}$-word for $q$. Define the paths $p'$ and $q'$ by the words $u_1 u_2\cdots u_{b-1} v_b v_{b+1} \cdots v_m$ and  $v_1 v_2 \cdots v_{b-1} u_b u_{b+1} \cdots u_m$. It is clear that $T_p+T_q=T_{p'}+T_{q'}$. For all $1 \leq  i < j \leq m$ with $j-i < b-a$, we have $\delta_{ij}(p)+\delta_{ij}(q)=\delta_{ij}(p')+\delta_{ij}(q')$. (When $j<b$, we have $\delta_{ij}(p)=\delta_{ij}(p')$ and  $\delta_{ij}(q)=\delta_{ij}(q')$; when $i > a$ the reverse holds.) We also have this equality when $j-i=b-a$, but $(i,j) \neq (a,b)$. On the other hand, $\delta_{ab}(p)=\delta_{ab}(q)=1$ while $\delta_{ab}(p')=\delta_{ab}(q')=0$. So $w(T_{p})+w(T_{q})$ contains two $\epsilon^{b-a}$ terms that $w(T_{p})+w(T_{q})$ does not, and all other differences between $w(T_{p'})+w(T_{q'})$ and $w(T_{p})+w(T_{q})$ involve higher powers of $\epsilon$. (And there are at most $\binom{m}{2}-1$ such other differences.) So, if we take $\epsilon$ small enough, then $w(T_{p})+w(T_{q}) > w(T_{p'})+w(T_{q'})$. Let $R'$ be the collection of paths obtained by taking $R$ and replacing $p$ and $q$ with $p'$ and $q'$. Then $\sum_{r \in R'} T_r=S$ and $\sum_{r \in R} w(T_r) > \sum_{r \in R'} w(T_r)$. This contradicts our choice of $R$ as minimal, and proves the theorem.
\end{proof}

\begin{corollary}
With respect to a certain term order, the monomial basis for the Pl\"ucker algebra we constructed from $\mncL$ is the basis of standard monomials. 
\end{corollary}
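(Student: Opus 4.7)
The plan is to combine two degenerations: the sagbi degeneration of the Pl\"ucker algebra to the semigroup algebra $\CC[GT_L]$ supplied by Theorems~\ref{thm:cr} and~\ref{thm:semi}, and the Gr\"obner degeneration of $\CC[GT_L]$ to a Stanley--Reisner ring induced by the weight $w$ constructed above. Present the Pl\"ucker algebra as $R = \CC[y_I]/J$ with $y_I \mapsto x_I$, where $J$ is the ideal of algebraic relations among Pl\"ucker coordinates. Fix a diagonal term order on $\CC[x_{ij}]$ coming from a weight vector $\mathbf u$; by Theorem~\ref{thm:cr} the Pl\"ucker coordinates form a sagbi basis, which, after pulling $\mathbf u$ back to a weight $\tilde{\mathbf u}(y_I) := \mathbf u(\mathrm{in}(x_I))$ on $\CC[y_I]$, is equivalent to the statement that $\mathrm{in}_{\tilde{\mathbf u}}(J)$ is the toric ideal $I_{GT_L}$ cutting out $\mathrm{Spec}\,\CC[GT_L]$ under the bijection $y_I \leftrightarrow T_{p(I)}$ of Theorem~\ref{thm:semi}.

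Next, pull the weight $w$ of the preceding theorem back to $\CC[y_I]$ by $\tilde w(y_I) := w(T_{p(I)})$, and form a combined term order on $\CC[y_I]$ by first comparing with $\tilde{\mathbf u}$, breaking ties with $\tilde w$, and then breaking remaining ties with any fixed monomial order. The standard dictionary between regular triangulations of a lattice point configuration and squarefree initial ideals of its toric ideal---see \cite[Chapter~8]{Sturm}---then says that $\mathrm{in}_{\tilde w}$ applied to the toric ideal of $\{T_p\}$ is the Stanley--Reisner ideal of the regular triangulation of $GT^0_L$ induced by $w$, which by the preceding theorem is $\mncL$. Chaining the two initial-ideal operations, the initial ideal of $J$ under the combined term order is the Stanley--Reisner ideal $\mathrm{SR}(\mncL)$.

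The standard monomials of $\mathrm{SR}(\mncL)$ are exactly the monomials $\prod y_I^{a_I}$ whose support is a face of $\mncL$, i.e.\ the $\mncL$-monomials, and these descend to a $\CC$-basis of $R$ because the Hilbert function is preserved under Gr\"obner degeneration (and, independently, by Corollary~\ref{cor:cr}). Hence the $\mncL$-monomial basis is the standard monomial basis for the term order just built. The main obstacle is the bookkeeping for the iterated initial ideal: one has to check that, for $\epsilon$ small enough, the single weight $\tilde{\mathbf u} + \epsilon \tilde w$ genuinely computes $\mathrm{in}_{\tilde w}\,\mathrm{in}_{\tilde{\mathbf u}}(J)$ in one step (a standard appeal to \cite[Chapter~8]{Sturm}), and that the sagbi property yields $\mathrm{in}_{\tilde{\mathbf u}}(J) = I_{GT_L}$ as an equality of ideals rather than merely of their varieties, which is where Theorem~\ref{thm:cr} is used in its full strength.
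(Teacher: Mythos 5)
Your overall route (sagbi degeneration to $\CC[GT_L]$, then a Gr\"obner degeneration of the toric ideal via the weight $w$, chained by a refined weight/term order) is essentially the same mechanism the paper invokes through \cite[Corollary~11.6(2)]{Sturm}. But there is a genuine gap at the step where you assert that $\mathrm{in}_{\tilde w}$ of the toric ideal of $\{T_p\}$ \emph{is} the Stanley--Reisner ideal of $\mncL$. The standard dictionary does not say that: for a regular triangulation $\Delta_w$ of a point configuration $A$, one only has $\sqrt{\mathrm{in}_w(I_A)} = I_{\Delta_w}$ in general (\cite[Theorem~8.3]{Sturm}); the initial ideal itself equals the Stanley--Reisner ideal, i.e.\ is radical, if and only if the triangulation is \emph{unimodular} (\cite[Corollary~8.9]{Sturm}). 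If the initial ideal were strictly smaller than the Stanley--Reisner ideal, the standard monomials would strictly contain the face-supported $\mncL$-monomials, and your conclusion would fail at exactly this point. This unimodularity check is precisely the nontrivial content of the paper's proof: $\mncL$ is unimodular because any lattice point of $GT_L$ lying in the cone over a face is decomposed by the Boston driving rule into an \emph{integral} nonnegative combination of the corresponding $T_p$.

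The gap is fixable in two ways. Either supply the unimodularity argument above and then quote \cite[Corollary~8.9]{Sturm}, as the paper does; or exploit your parenthetical appeal to Corollary~\ref{cor:cr} more seriously: since $\mathrm{in}_{\tilde w}(I_{GT_L}) \subseteq I_{\mncL}$, every $\mncL$-monomial is a standard monomial, and since the $\mncL$-monomials already form a graded basis of the Pl\"ucker algebra by Corollary~\ref{cor:cr} while the standard monomials do as well, a dimension count in each multidegree forces the two sets to coincide (which in turn proves radicality). As written, however, the proposal papers over this radicality issue, so it does not yet establish the corollary. Your other main step, that the sagbi property gives $\mathrm{in}_{\tilde{\mathbf u}}(J) = I_{GT_L}$ as ideals and that the combined weight computes the iterated initial ideal, is correct and is exactly what \cite[Corollary~11.6]{Sturm} packages.
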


\begin{proof}
By~\cite[Corollary~11.6(2)]{Sturm}, there is a term order such that $\mncL$ is an ``initial complex'' of the Pl\"ucker algebra (in the language of~\cite{Sturm}). The triangulation $\mncL$ is unimodular, meaning that, if $P$ is any collection of non-kissing paths, and $x$ is in the positive real span of $\{ T_p \}_{p \in P}$, then $x$ is a positive \emph{integral} combination of $\{ T_p \}_{p \in P}$. By~\cite[Corollary~8.9]{Sturm}, when the regular triangulation is unimodular, the corresponding initial ideal is reduced, meaning that the standard monomials are precisely the monomials which are supported on faces of the initial complex.
\end{proof}

As another corollary, we deduce that $\mncL$ is shellable.

\begin{corollary}
 The simplicial complex $\mncL$ is shellable.
\end{corollary}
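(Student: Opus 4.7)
The plan is to deduce shellability purely from the regularity we just established. By the previous theorem, $\mncL$ is the regular triangulation of the polytope $GT^0_L$ induced by the weight function $w$. It is a classical fact (see, for example, \cite[Lecture~8]{Zieg} or the discussion of line shellings of regular triangulations in~\cite[Chapter~8]{Sturm}) that every regular triangulation of a polytope is shellable. Applying this result to the triangulation of $GT^0_L$ by $\mncL$ yields the desired conclusion.

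If one wishes to spell this out, the shelling can be constructed explicitly as a line shelling adapted to $w$. Lift each vertex $T_p$ of $GT^0_L$ to the point $(T_p, w(T_p)) \in \R^{N+1}$, take the convex hull, and let $\Delta$ be the collection of lower facets of this lifted polytope; by the characterization of regular triangulations used in the previous theorem, the projections of these lower facets down to $GT^0_L$ are exactly the facets of $\mncL$. Choose a generic point $x$ lying far below $GT^0_L$ (with respect to the lifting direction), and order the lower facets $F_1, F_2, \ldots, F_t$ of the lifted polytope in any linear extension of visibility from $x$, i.e., so that if $F_i$ blocks $F_j$ along some line from $x$ then $i<j$. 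The Bruggesser--Mani theorem then shows that $F_1, \ldots, F_t$ is a shelling of the boundary complex of the lifted polytope restricted to its lower faces, and projection preserves the shelling property. Hence the induced ordering of facets of $\mncL$ is a shelling.

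The only delicate point, and the one I would expect to have to verify carefully, is that Bruggesser--Mani applies cleanly to the ``lower envelope'' of a polytope and not just to full boundary complexes; the standard reference covers exactly this situation by choosing $x$ to lie below every lower facet and above no upper facet, so that the line-shelling order only encounters lower faces. Granting this, the proof is complete.
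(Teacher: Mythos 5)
Your proposal is correct and follows essentially the same route as the paper, which deduces shellability of $\mncL$ directly from the regularity theorem via \cite[Corollary~8.14]{Zieg} (regular subdivisions of polytopes are shellable). Your additional sketch of the Bruggesser--Mani line-shelling of the lower envelope is just an unpacking of the proof of that cited result, so nothing further is needed.
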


\begin{proof}
By \cite[Corollary~8.14]{Zieg}, regular subdivisions of polytopes are always shellable.
\end{proof}

\begin{definition}[Solid paths]
A path $p$ in $L$ is \emph{solid} if it does one of the following: 
\begin{enumerate}
  \item $p$ moves east (possibly zero  distance), then moves north until it cannot do so any longer, then it moves east (possibly zero distance) until it hits a sink,
  \item $p$ moves north (possibly zero distance), then moves east until it cannot do so any longer, then it moves north (possibly zero distance) until it hits a sink.
\end{enumerate}
\end{definition}

Similarly, call an element $I$ of $2^{\bf m}_L$ (i.e., $I$ such that $p(I)$ fits in $L$) \emph{solid} if the corresponding path $p(I)$ is solid.

\begin{example}
For example, the path shown on Figure \ref{fig:dr3} corresponding to $\{2,3,6\}$ is not solid, while the paths corresponding to $\{3,4,5\}$, $\{1,2\}$ or $\{1,2,3,4,5,6\}$ are solid. 
\end{example}

% We are going to use the following lemma.

\begin{lemma} \label{lem:frozen}
Every solid set $I$ in $2^{\bf m}_L$ is non-crossing with every other element of $2^{\bf m}_L$, and thus belongs to every facet of $\mncL$.
\end{lemma}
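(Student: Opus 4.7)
My plan is to appeal to Theorem~\ref{thm:path}(2), which reduces the lemma to showing that $p = p(I)$ is non-kissing with every $q = p(J)$ for $J \in 2^{\bf m}_L$. By the diagonal symmetry that swaps the two types of solid paths, I only need to handle type~(1), where $p = E^{a} N^{b} E^{c}$; write $v_{\mathrm{bot}} = (a, 0)$ and $v_{\mathrm{top}} = (a, b)$ for the only two nodes of $p$ at which the step direction can change. The two structural facts I will exploit are that $v_{\mathrm{bot}}$ sits on the south boundary of $L$, so $(a, -1) \notin L$, and that solidity forces the N-segment to rise as high as possible, so $(a, b+1) \notin L$. Since $L$ is a partial staircase with non-increasing column heights, the latter fact propagates along the top E-segment of $p$: every node $(x, b)$ on this segment has $(x, b+1) \notin L$.

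Next, I would fix an arbitrary proper common part of $p$ and $q$, with first node $v$ and last node $w$. Maximality forces $p$ and $q$ to approach $v$ from different directions---one from the west, one from the south---so $v$ must have both a western and a southern neighbor in $L$; analogously $w$ must have both a northern and an eastern neighbor in $L$. The non-kissing condition at this common part reduces to the single assertion that the step of $p$ just before $v$ coincides with the step of $p$ just after $w$ (by maximality, the corresponding equality for $q$ is then automatic). Suppose it fails; then $p$ turns between these two steps, and because $p$ has type~(1) only two scenarios can arise. If the turn is $E \to N$, then $v$ lies in the initial E-segment of $p$ or at $v_{\mathrm{bot}}$, forcing $v = (i, 0)$; but such a $v$ has no southern neighbor in $L$, a contradiction. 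If the turn is $N \to E$, then $w$ lies at $v_{\mathrm{top}}$ or on the top E-segment of $p$, forcing $w = (x, b)$ with $x \geq a$; but such a $w$ has no northern neighbor in $L$, again a contradiction.

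Once this is established, $I$ is non-crossing with every $J \in 2^{\bf m}_L$, so for any face $F$ of $\mncL$ the collection $F \cup \{I\}$ is still pairwise non-crossing and hence a face of $\mncL$; consequently $I$ lies in every facet. The main obstacle I expect in carrying out this plan is the boundary bookkeeping in the middle paragraph---translating ``$p$ must turn'' into precise statements about where $v$ and $w$ sit along $p$, and verifying that the two structural obstructions (no southern neighbor on the bottom row, no northern neighbor along the top E-segment) together cover every direction change $p$ can undergo across a proper common part, including the degenerate cases where $a$, $b$, or $c$ vanishes or where the common part is a single node.
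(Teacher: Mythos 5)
Your proof is correct and follows the same route as the paper: show that a solid path cannot kiss any path in $L$ and then invoke Theorem~\ref{thm:path}(2), which is exactly the verification the paper's one-line proof leaves to the reader. Your case analysis (turn $E\to N$ blocked by the absence of a southern neighbor on the bottom row, turn $N\to E$ blocked by the weakly decreasing column heights beyond the maximal north segment) is a sound and complete way to carry out that check.
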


\begin{proof}
One easily verifies that the solid paths $p$ cannot kiss any other path inside $L$, so by Theorem \ref{thm:path} the lemma follows.
\end{proof}

Let $2^{*}_L$ be a subset of $2^{\bf m}_L$ obtained by excluding the solid elements. Denote $\mncL^*$ the restriction of $\mncL$ to vertex set $2^{*}_L$. By Lemma \ref{lem:frozen} the complex $\mncL$ is a repeated cone over $\mncL^*$. Thus to understand the topology of $\mncL$ it suffices to understand the topology of $\mncL^*$.

\begin{theorem} \label{thtm:ncps}
The complex $\mncL^*$ is a topological sphere.
\end{theorem}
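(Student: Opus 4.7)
My plan is to realize $\mncL^*$ as the link, inside the PL ball $\mncL$, of the simplex $\sigma$ spanned by the solid vertices, and then to verify that $\sigma$ is an interior face of this ball. By Lemma~\ref{lem:frozen}, every solid vertex lies in every facet of $\mncL$, so the set of solid vertices spans a face $\sigma \in \mncL$ contained in every facet; hence $\mncL$ decomposes as the simplicial join $\sigma \ast \mncL^*$, and $\mncL^* = \operatorname{link}_{\mncL}(\sigma)$. Since $\mncL$ is a PL ball by Corollary~\ref{cor:purenc}, and since the link of a face in a PL ball is a sphere or a ball according to whether the face is interior or boundary, it suffices to show that $\sigma$ is an interior face.

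Next, I would reduce this to a statement about the ambient polytope. By Corollary~\ref{thm:nctri}, $\mncL$ triangulates $GT^0_L$, so a face $\tau$ of $\mncL$ lies on the boundary of the ball precisely when the vertices of $\tau$ all lie on a common facet of the polytope $GT^0_L$. Hence it is enough to check that no facet of $GT^0_L$ contains every $T_p$ with $p$ solid. By Remark~\ref{rem:RW}, $GT^0_L$ is the order polytope of the poset $P_L$ of critical nodes of $L$; its facets are of three types: $x_v = 0$ for $v$ minimal in $P_L$, $x_v = 1$ for $v$ maximal in $P_L$, and $x_v = x_w$ for a covering relation $v \lessdot w$. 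Translating via the correspondence between paths and vertices of $GT^0_L$, these conditions say respectively that $p$ passes strictly to the northwest of $v$, strictly to the southeast of $v$, or on the same side of both $v$ and $w$.

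I expect the verification that each such facet condition is escaped by some solid path to be the main, though routine, obstacle. The family of solid paths is parametrized by a choice of sink together with a turning point along either the top or the right boundary of $L$, and is rich enough that, for every critical node $v$, one can produce a solid path passing strictly to its northwest and another passing strictly to its southeast, and, for every cover $v \lessdot w$, one can produce a solid path separating them. A case-free alternative that I would try first is to show directly that some specific point in the relative interior of $\sigma$---for instance the centroid of $\{T_p : p \text{ solid}\}$---satisfies every facet-defining inequality of the order polytope strictly; this again gives that $\sigma$ is interior and completes the proof.
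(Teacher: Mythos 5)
Your argument is essentially correct, but it takes a genuinely different route from the paper's. The paper proves that $\mncL^*$ is a pseudomanifold without boundary (by showing that for a codimension-one face $F$ of $\mncL^*$, the face $F' = F \cup \{\text{solid vertices}\}$ of $\mncL$ cannot lie in any facet of $GT^0_L$, since every facet-defining equation $T^{i,j}=T^{i+1,j}$ or $T^{i,j-1}=T^{i,j}$ is violated by some solid path), and then combines this with shellability of $\mncL$ --- obtained from the regularity theorem earlier in the section --- and Hudson's result that a shellable pseudomanifold without boundary is a sphere. You instead exploit the join decomposition $\mncL = \sigma * \mncL^*$ (which the paper also notes, as the ``repeated cone'' over $\mncL^*$), identify $\mncL^* = \operatorname{link}_{\mncL}(\sigma)$, and invoke the PL fact that the link of an interior face of a combinatorial ball is a PL sphere; the combinatorial heart of your proof --- that no facet of $GT^0_L$ contains $T_p$ for all solid $p$ --- is exactly the same verification the paper asserts, and both treatments leave it at the level of a routine check. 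What your route buys is independence from shellability and the regular-triangulation machinery; what it costs is a genuine reliance on PL topology, and here one citation needs care: Corollary~\ref{cor:purenc} only asserts that $\mncL$ is \emph{homeomorphic} to a ball, which is not enough (links of interior faces in a merely topological ball need not be spheres). You must instead use Corollary~\ref{thm:nctri}: since $\mncL$ is a geometric triangulation of the convex polytope $GT^0_L$, it is a combinatorial (PL) ball whose boundary subcomplex consists precisely of the simplices lying in $\partial GT^0_L$; with that substitution your identification of boundary faces as those whose vertices lie in a common facet of the polytope, and hence the whole argument, goes through.
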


\begin{proof}
Let $d$ be the dimension of $\mncL^*$ (which we know is pure). First, we argue that $\mncL^*$ is a pseudomanifold without a boundary, i.e., that every face of dimension $d-1$ is contained in precisely two faces of dimension $d$. Let $F$ be a face of $\mncL^*$ of dimension $d-1$. Let $F'$ be the face of $\mncL$ which contains $F$ and all of the solid elements, so $F'$ has dimension $N(L)-1$. We claim that, under the isomorphism $\mncL \cong GT^0_L$, the face $F'$ is not contained in the boundary of $GT^0_L$. If $F'$ were contained in the boundary of $GT^0_L$ then it would be contained in some facet of $GT^0_L$. The defining equations of these facets are of the form $T^{i,j}=T^{i+1, j}$ and $T^{i, j-1}=T^{i,j}$, where $(i,j)$ and $(i+1,j)$, or $(i, j-1)$ and $(i,j)$, are adjacent cells of $L$ separated by an edge of $L$. (The corresponding defining inequalities of the polytope $GT^0_L$ are formed by replacing ``$=$'' with ``$<$''.) But , for every one of these equations, one can find a solid path $p$ such that $T_p$ does not obey it. Thus, $F'$ is not contained in the boundary of $GT^0_L$. So $F'$ must lie in two facets of $\mncL$ and, thus, $F$ lies in two facets of $\mncL^*$.

The shellability of $\mncL$ is equivalent to shellability of $\mncL^*$, and it remains to recall the well-known result \cite[Corollary 1.28]{H} that a shellable pseudomanifold without a boundary is a topological sphere.
\end{proof}

\begin{remark}
Gal has conjectured \cite[Conj. 2.1.7]{Gal} that all \emph{flag} homology spheres are $\gamma$-nonnegative, a strengthening of the well-known Charney-Davis conjecture. Every compatibility complex is clearly flag (a minimal non-face is a pair), and as described in Remark \ref{rem:gamma} they are $\gamma$-nonnegative. Thus, $\mncL^*$ satisfies Gal's conjecture.
\end{remark}

\begin{remark}
Let $V$ be the vector space spanned by the differences $T_p-T_q$, where $p$ and $q$ range over pairs of solid paths. Let $\Gamma_L$ be the image of the polytope $GT^0_L$ in the quotient by $V$. It is not hard to show that every facet of $GT^0_L$ contains all but one of the solid paths. Thus, by \cite[Proposition 2.3]{Ath}, $\mncL^*$ triangulates the boundary of $\Gamma_L$. However, it is not true that $\Gamma_L$ is a simplicial polytope; rather, some faces in the boundary of $\Gamma_L$ are subdivided in $GT^0_L$. We do not know whether there is a simplicial polytope whose boundary is isomorphic to $\mncL^*$, but we conjecture that there is.  
\end{remark}

\begin{conjecture}
For any partial staircase shape $L$, complex $\mncL^*$ is polytopal.
\end{conjecture}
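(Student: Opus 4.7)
The plan is to realize $\mncL^*$ as the boundary complex of a face figure of a lifted polytope associated to the regular triangulation $\mncL$ of $GT_L^0$ from Section~\ref{sec:regular}. By Lemma~\ref{lem:frozen}, the simplex $\sigma_S$ spanned by the solid paths lies in every facet of $\mncL$, so $\mncL$ decomposes as the simplicial join $\sigma_S * \mncL^*$; and by Theorem~\ref{thtm:ncps}, $\mncL^*$ is a topological sphere. This is precisely the combinatorial setup in which a face-figure construction should produce a polytopal realization.

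Concretely, form the lifted polytope $\tilde Q := \Hull\{(T_p, w(T_p)) : p \text{ a path in } L\} \subset \RR^{N(L)+1}$, where $w$ is the weight function from Section~\ref{sec:regular}. By the regularity theorem of that section, the projection of the lower hull of $\tilde Q$ to $\RR^{N(L)}$ is the triangulation $\mncL$. Since every facet of $\mncL$ contains $\sigma_S$, the lifted simplex $\tilde \sigma_S := \{(T_p, w(T_p)) : p \text{ solid}\}$ is contained in every lower facet of $\tilde Q$, and is therefore itself a face of $\tilde Q$; moreover, the link of $\tilde \sigma_S$ in the lower hull is precisely $\mncL^*$. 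The candidate polytope $\Pi_L$ is the face figure (quotient polytope) $\tilde Q / \tilde \sigma_S$, whose dimension is $\dim \mncL^* + 1$ and whose simpliciality follows from the unimodularity of $\mncL$ established in Section~\ref{sec:regular}.

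The main obstacle is ensuring that the boundary complex of $\Pi_L$ is \emph{exactly} $\mncL^*$, without extraneous contributions from the upper hull of $\tilde Q$: if $\tilde \sigma_S$ happens to also lie in some upper facet of $\tilde Q$, then the link of $\tilde \sigma_S$ in $\partial \tilde Q$ picks up spurious faces. To handle this I would modify $\tilde Q$ by replacing the lifting $w(T_p)$ with $w(T_p) - c$ for solid $p$, obtaining
\[ \tilde Q_c := \Hull\bigl(\{(T_p, w(T_p)) : p \text{ non-solid}\} \cup \{(T_p, w(T_p) - c) : p \text{ solid}\}\bigr), \]
where the parameter $c>0$ shifts the solid vertices downward. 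A small $c$ preserves the lower-hull combinatorics $\mncL$ by stability of regular triangulations, while a sufficiently positive $c$ forces $\tilde \sigma_S$ onto the lower hull only. One can also combine the shift with a strictly concave perturbation such as subtracting $\lambda \sum_{i,j}(T_p^{i,j})^2$ from $w$ on the non-solid vertices, which flattens the upper hull and decouples the two requirements.

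The technical crux of the argument is therefore to show that ranges of $c$ (and $\lambda$) satisfying both conditions exist and overlap; granted this, the face figure $\tilde Q_c / \tilde \sigma_S$ is a simplicial polytope whose boundary complex is $\mncL^*$. The remaining bookkeeping is the combinatorial identification of that boundary with $\mncL^*$: one must check that a subset of non-solid lifted vertices spans a face of $\tilde Q_c$ containing $\tilde \sigma_S$ iff the corresponding subset of paths is a face of $\mncL^*$, which reduces to the regularity of $\mncL$ together with the lower-hull preservation above.
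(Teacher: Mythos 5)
This statement is left open in the paper itself---the authors explicitly say they do not know whether a simplicial polytope with boundary complex $\mncL^*$ exists---so there is no proof of record to compare yours against, and your proposal must stand on its own. Its overall strategy (lift $GT^0_L$ by the weight $w$ of Section~\ref{sec:regular}, and take the face figure of $\tilde Q$ at the simplex $\tilde\sigma_S$ on the solid vertices) is sound, but as written there is a genuine gap at exactly the step you flag as the technical crux: you never show that some value of $c$ (or $\lambda$) satisfies both requirements simultaneously. ``Small $c$ preserves the lower hull'' is fine (the secondary cone of a regular triangulation is full-dimensional), but nothing in your argument shows that small $c$ also removes every non-lower face of $\tilde Q_c$ containing $\tilde\sigma_S$, and for the large $c$ you invoke for that purpose you can no longer appeal to stability of the lower hull; the overlap of the two ranges is precisely what needs proof and is not supplied. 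Two smaller points: the claim that $\tilde\sigma_S$ is a face of $\tilde Q$ needs the short argument that the intersection of all lower facets is a face whose vertex set is exactly the solid vertices (this uses that no non-solid vertex lies in every facet of $\mncL$, which follows from Theorem~\ref{thtm:ncps}, since otherwise $\mncL^*$ would be a cone and not a sphere); and simpliciality of the face figure has nothing to do with unimodularity---it holds because the relevant faces of $\tilde Q$ are lifted simplices of the triangulation.

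The good news is that the gap appears closable, and in fact the perturbation of $w$ seems unnecessary. The faces of $\tilde Q/\tilde\sigma_S$ correspond to the faces of $\tilde Q$ containing $\tilde\sigma_S$; the lower ones are exactly the lifts of $\sigma_S * G$ with $G\in\mncL^*$ (since $\mncL=\sigma_S * \mncL^*$ by Lemma~\ref{lem:frozen}, and a face of a lower face is lower), and they form a subcomplex $X$ of the boundary complex of the face figure consisting of simplices and abstractly isomorphic to $\mncL^*$. Writing $s$ for the number of solid paths, Theorem~\ref{thtm:ncps} says $|X|$ is a closed sphere of dimension $N(L)-s$, which is the full dimension of the boundary of the face figure (a polytope of dimension $N(L)-s+1$, outside trivial low-dimensional cases). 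Invariance of domain then makes $|X|$ open as well as closed in that boundary sphere, hence equal to it, and since a face of $\tilde Q/\tilde\sigma_S$ whose relative interior meets a face of $X$ of the same dimension must coincide with it, one gets $X=\partial(\tilde Q/\tilde\sigma_S)$ as complexes. In other words, no upper or vertical face of $\tilde Q$ contains $\tilde\sigma_S$ at all, and the face figure itself is a simplicial polytope with boundary complex $\mncL^*$---this is exactly the mechanism that repairs the failure of $\Gamma_L$ noted in the paper, since the lift materializes the subdivided faces of $\partial\Gamma_L$ as genuine faces. If you carry this out carefully (including the degenerate cases where $w$ is affine on the vertices or $\mncL^*$ has dimension at most $0$), you will not merely have repaired your argument; you will have settled the conjecture, so it deserves a careful write-up rather than a sketch.
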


\begin{example}
Let us take $m=5$ and take the shape $L$ with sinks $(1,4)$ and $(3,2)$.  
\begin{figure}[h!]
\begin{center}
\input{drf7.pstex_t}
\end{center}
\caption{}\label{fig:dr12}
\end{figure}
Then the associated complex $\mncL^*$ and a realization of the dual polytope are shown on Figure \ref{fig:dr12}. Here the outer triangle $\{(2,3),(1,4),(2,4)\}$ should also be understood as a face.

\end{example}

\begin{example}
If $L$ is a $2 \times n$ (or an $n \times 2$) rectangle, $\mncL^*$ is the type $A$ {\it {cluster complex}} of \cite{FZ}. It is known to be polytopal, and the dual polytope is known as the \emph{associahedron}. However, $\Gamma_L$ is usually not simplicial. The first counter-example is when $n=4$ (so $m=6$). Here $\Gamma_L$ has a square face whose vertices correspond to $14$, $15$, $25$ and $24$. In $\mncL^*$, this square is subdivided into two triangles, along the diagonal joining $(1,5)$ and $(2,4)$. 
\end{example}

% \begin{conjecture}
% For any partial staircase shape $L$ complex $\mncL^*$ is polytopal.
% \end{conjecture}

\begin{example}
 Let $L$ be a $3 \times 3$ rectangle (so $m=6$). In this example, we will explore the difference between $\mncL$ and $\mwsL$. There are $6$ solid paths and $N(L)=9$, so $\mncL^*$ is a $3$-sphere. We 
write $\mwsL^*$ for the subcomplex of $\mncL^*$ corresponding to weakly separated paths. There are two pairs of $3$-element subsets of $[6]$ which are non-crossing but not weakly separated, namely the pairs $(145,236)$ and $(124, 356)$. (The first pair of paths crosses twice; the second pair has an hourglass.) Each of these pairs corresponds to an edge in $\mncL^*$. Each of these edges is surrounded by four tetrahedra and these tetrahedra fit together to form an octahedron subdivided around a central axis. These two octahedra are disjoint from one another. In $\mwsL^*$, these two octahedra are removed, leaving behind a complex homeomorphic to $S^2 \times [0,1]$. The endpoints of this product are a pair of $2$-spheres, each triangulated as the boundary of the octahedron. The simplicial complex $\mwsL^*$ is a subcomplex of the $D_4$-cluster complex, which is again a $3$-sphere. In the $D_4$-cluster complex, two new vertices are added. One of these vertices is compatible with the vertices is the first octahedron, and the other with the vertices in the second. Thus, the $D_4$-cluster complex caps off the two open ends of $\mwsL^*$ to form a $3$-sphere. 
\end{example}

\end{document}